%------------------------------------------------------------------------------
% Beginning of journal.tex
%------------------------------------------------------------------------------
%
% AMS-LaTeX version 2 sample file for journals, based on amsart.cls.
%
%        ***     DO NOT USE THIS FILE AS A STARTER.      ***
%        ***  USE THE JOURNAL-SPECIFIC *.TEMPLATE FILE.  ***
%
% Replace amsart by the documentclass for the target journal, e.g., tran-l.
%
\documentclass{amsart}
\usepackage{amssymb}
\usepackage[active]{srcltx}
\usepackage{a4wide}
\usepackage{amsmath,amsthm}
\usepackage{amssymb}
\usepackage{amstext}
\usepackage{cite}
\usepackage{epsfig}
\usepackage{enumerate}
\usepackage{color}
\usepackage{amssymb}
\newtheorem{theorem}{Theorem}[section]
\newtheorem{lemma}[theorem]{Lemma}

\theoremstyle{definition}
\newtheorem{definition}[theorem]{Definition}

\theoremstyle{remark}
\newtheorem{remark}[theorem]{Remark}
\newtheorem{note}[theorem]{Note}

\numberwithin{equation}{section}

%    Absolute value notation

\newcommand{\rectangle}{{%
  \ooalign{$\sqsubset\mkern3mu$\cr$\mkern3mu\sqsupset$\cr}%
}}

%    Blank box placeholder for figures (to avoid requiring any
%    particular graphics capabilities for printing this document).

\newcommand{\N}{\mathbb{N}}
\newcommand{\R}{\mathbb{R}}
\newcommand{\cC}{\mathcal{C}}

\newcommand{\be}{\begin{equation}}
\newcommand{\ee}{\end{equation}}

\begin{document}

\title{On bivariate fractal approximation}

%    Information for first author
\author{V. Agrawal}
\address{Department of Mathematical Sciences, Indian Institute of Technology (BHU), Varanasi- 221005, India }
\email{vishal.agrawal1992@gmail.com}

\author{T. Som}
\address
{Department of Mathematical Sciences, Indian Institute of Technology (BHU), Varanasi- 221005, India}
\email{tsom.apm@iitbhu.ac.in}
\author{S. Verma}
\address{Department of Mathematics, Indian Institute of Technology Delhi, New Delhi- 110016, India }
\email{saurabh331146@gmail.com}
%    \thanks will become a 1st page footnote.
%\thanks{The first author was supported in part by NSF Grant \#000000.}

%    Information for second author

%\thanks{The author was supported in part by UGC.}

%    General info
\subjclass[2010]{Primary 28A80; Secondary 10K50, 41A10}

%\date{January 1, 2001 and, in revised form, June 22, 2001.}

%\dedicatory{This paper is dedicated to our advisors.}
 
\keywords{fractal dimension, fractal interpolation, fractal surfaces, Bernstein polynomials, bivariate constrained approximation}

\begin{abstract}
In this paper, the notion of dimension preserving approximation for real -valued bivariate continuous functions, defined on a rectangular domain $\rectangle$, has been introduced and several results, similar to well-known results of bivariate constrained approximation in terms of dimension preserving approximants, have been established. Further, some clue for the construction of bivariate dimension preserving approximants, using the concept of fractal interpolation functions, has been added. In the last part, some multi-valued fractal operators associated with bivariate $\alpha$-fractal functions are defined and studied.
\end{abstract}

\maketitle

%%%%%%%%%%%%%%%%%%%%%%%%%%%%%%%%%%%%%%%%%%%%%%%%%%%%%%%%%%%%%%%%%%%%%%%%

%%%%%%%%%%%%%%%%%%%%%%%%%%%%%%%%%%%%%%%%%%%%%%%%%%%%%%%%%%%%%%%%%%%%%%%%
.

\section{Introduction} 
Following the seminal work of Barnsley \cite{MF1}, Navascu\'es \cite{M2,M1} studied the approximation of functions using their fractal counterparts termed as $\alpha$-fractal functions. In the same vein, 
Verma and Masspoust \cite{VM} recently introduced the notion of dimension preserving approximation. We use dim and $Gr(f)$ respectively to represent fractal dimension and graph of a function of $f$. \par
 Various concepts of fractal dimensions are available but we cover only those fractal dimensions that are suitable for this article. 
 We only need to mention the Hausdorff dimension, the box dimension, and the packing dimension defined for nonempty subsets of $\R^n$, $n\in \N$, and denoted by $\dim_H,~\dim_B$ and $\dim_P $ respectively.
 To know these fractal dimensions readers are suggested to go through, for instance, \cite{Fal,PM1}.

The following relations are established between these fractal dimensions. (see \cite{Fal}):
\[
\dim_H F \leq \underline{\dim}_B F \leq \overline{\dim}_B F
\]
and
\[
\dim_H F \leq \dim_P F  \leq \overline{\dim}_B F.
\]
\par
The class of all real-valued continuous functions on $ \rectangle:=I \times J$ is defined by  $ \cC\big(\rectangle\big)$ where $I=[a,b ]$ and $J=[c,d] .$   
 
 For a bivariate function $f$, we denote the derivative of $(k,l)$-th order by $D^{(k,l)}f$, that is, $D^{(k,l)}f:= \dfrac{\partial^{k+l}f}{\partial x^k \partial y^l}$. Let
     $$\mathcal{C}^{m,n}(\rectangle)= \{f: \rectangle \to \mathbb{R}; ~ D^{(k,l)}f \in  \cC\big(\rectangle\big),~~ \forall~~ 0\le k \le m, ~0\le l\le n \}.$$
     If $D^{(k,l)}f(\boldsymbol{x}) \ge 0, ~\forall~~ \boldsymbol{x} \in \rectangle,$ then we say the function $f$ is $(m,n)$-convex.
     Let $g\in \cC\big(\rectangle\big)$ such that $  \dim (Gr(g)) > 2$. We may refer to \cite{Shen} for the existence of such functions. The function $f:\rectangle \to \mathbb{R}$ defined by $f(x,y) :=\int\limits_{a}^{x}\int\limits_{c}^{y} g(t,s)dt ds$ satisfies the following: 
     \[
     \dim (Gr(f)) =2\quad\text{and}\quad\dim Gr(D^{(1,1)}f) =\dim (Gr(g)) > 2,
     \]
     where $\dim$ denotes a fractal dimension.
     
     Recall that the tensor product Bernstein polynomial on $\rectangle$ is defined as:
    $$B_{m,n}(f)(x,y)= \sum_{i=0}^m \sum_{j=0}^n f\Big(a+\frac{i(b-a)}{m}, c+\frac{j(d-c)}{n}\Big) {m \choose i} {n \choose j} (x-a)^i (b-x)^{m-i} (y-c)^j (d-y)^{n-j}.$$
    
Let us approximate a function $f \in \mathcal{C}^{k,l}(\rectangle)$ by $B_{m,n}(f)$, then (see \cite{Gal} for several properties of Bernstein polynomials) we have the following:
\begin{itemize}
\item $B_{m,n}(f) \to f$ uniformly on $ \rectangle.$
\item $\Big(D^{(k,l)}(B_{m,n}(f))\Big) \to D^{(k,l)}f$ uniformly on $ \rectangle.$
\item Since $B_{m,n}(f)$ and $D^{(k,l)}(B_{m,n}(f))$ are polynomials, then $\dim\Big(Gr\big(D^{(k,l)}(B_{m,n}(f))\big)\Big)=\dim(Gr(B_{m,n}(f)))=\dim(Gr(f))=2.$
\end{itemize}
The above items may conclude that the approximation by Bernstein polynomials maintains the smoothness of a function but not (necessarily) the dimensions of its partial derivatives. 

The present paper explores the approximation perspective relative to fractal dimension of a function and its partial derivatives.

The paper is structured as follows. In Section 1, we give a brief introduction and some preliminaries needed for the paper. In Section 2, we start to prove some results regarding dimension preserving approximation. In Section 3, we define some multi-valued mappings which are defined with the help of bivariate $\alpha$-fractal functions, and establish some properties of them.

\section{Dimension preserving approximation of bivariate functions}
Firstly, we mention the following result required for our paper:

\begin{lemma}[\cite{VM}, Lemma $3.1$]\label{lipdim}
Let $A \subset \mathbb{R}^m $ and $f,g:A \rightarrow \mathbb{R}^n$ be continuous functions. Then,  
\[
\dim_H (Gr(f+g)) = \dim_H (Gr(g))\quad\text{and}\quad\dim_P (Gr(f+g)) = \dim_P (Gr(g))
\]
provided that $f$ is a Lipschitz function.
 \end{lemma}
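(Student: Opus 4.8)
The plan is to prove the two equalities by a symmetric pair of inequalities, exploiting the fact that $g = (f+g) + (-f)$ and that $-f$ is Lipschitz whenever $f$ is. The key structural observation is that the graph map behaves well under addition of a Lipschitz perturbation: if I can show one inclusion-type inequality, say $\dim_H(Gr(f+g)) \le \dim_H(Gr(g))$, then applying the same reasoning with the roles of $g$ and $f+g$ interchanged (and using that $g = (f+g) - f$ with $-f$ still Lipschitz) yields the reverse inequality, giving equality. The same two-sided argument will handle the packing-dimension statement verbatim, since both $\dim_H$ and $\dim_P$ are invariant under bi-Lipschitz maps.

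First I would construct an explicit map between the two graphs. Define $\Phi : Gr(g) \to Gr(f+g)$ by $\Phi(x, g(x)) = (x, (f+g)(x))$. This is a bijection onto $Gr(f+g)$ with inverse $\Psi(x,(f+g)(x)) = (x, g(x))$. The heart of the proof is to verify that $\Phi$ is Lipschitz. For two points $(x, g(x))$ and $(x', g(x'))$ in $Gr(g)$, I would estimate
\[
\big\| \Phi(x,g(x)) - \Phi(x',g(x')) \big\| = \big\| \big(x - x',\, (f+g)(x) - (f+g)(x')\big) \big\|,
\]
and then bound the second coordinate using $(f+g)(x) - (f+g)(x') = \big(f(x)-f(x')\big) + \big(g(x)-g(x')\big)$, where the first term is controlled by the Lipschitz constant of $f$ times $\|x - x'\|$ and the second term is exactly the vertical displacement along $Gr(g)$. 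Thus the distance between image points is bounded by a constant multiple of the distance between the original graph points, proving $\Phi$ is Lipschitz. By the standard fact that Lipschitz maps do not increase Hausdorff dimension, $\dim_H(Gr(f+g)) = \dim_H(\Phi(Gr(g))) \le \dim_H(Gr(g))$.

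For the reverse inequality I would apply the identical construction to the decomposition $g = (f+g) + (-f)$, noting $-f$ is Lipschitz with the same constant as $f$; this produces a Lipschitz map from $Gr(f+g)$ onto $Gr(g)$ and hence $\dim_H(Gr(g)) \le \dim_H(Gr(f+g))$. Combining the two gives the first equality. The packing-dimension equality follows by replacing $\dim_H$ with $\dim_P$ throughout, using the analogous non-increase of packing dimension under Lipschitz maps.

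The main obstacle, though it is more a matter of care than of difficulty, is the Lipschitz estimate for $\Phi$: one must handle the full norm on $\mathbb{R}^m \times \mathbb{R}^n$ rather than treating the coordinates separately, and verify that the horizontal component $\|x - x'\|$ is genuinely dominated by the graph distance $\|(x - x', g(x) - g(x'))\|$ so that the Lipschitz bound on $f$ can be absorbed into a single constant. I expect no essential obstacle beyond bookkeeping, since the geometry is that of a vertical shear of the graph, which is manifestly bi-Lipschitz onto its image when the shear is Lipschitz.
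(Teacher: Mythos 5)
Your argument is correct. Note that the paper itself does not prove this lemma; it is imported by citation from Verma--Massopust \cite{VM}, where the proof is exactly the bi-Lipschitz shear argument you give: the map $(x,g(x))\mapsto(x,f(x)+g(x))$ is Lipschitz with constant controlled by $1+\Lip(f)$ because the horizontal displacement $\|x-x'\|$ is dominated by the graph distance, and its inverse is the same construction applied to $-f$, so both Hausdorff and packing dimension are preserved. Your two-one-sided-inequalities presentation is equivalent to observing directly that the shear is bi-Lipschitz, and the bookkeeping you flag (absorbing the Lipschitz bound on $f$ into a single constant for the product norm) goes through without difficulty since $\|x-x'\|\le\|(x-x',\,g(x)-g(x'))\|$.
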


\begin{remark}
Note that the above lemma is also true for box dimensions.
\end{remark}

Let us denote the class of $Y$-valued Lipschitz functions on $X$ by $\mathcal{L}ip (X,Y),$ where $(X,d_X)$ is a compact metric space and $(Y,\|.\|_Y)$ is a normed linear space. Note that this space is a dense subset of $\mathcal{C}(X,Y)$ with respect to the supremum norm.

 In view of Lipschitz invariance property of dimension, one may conclude that the upcoming theorem holds for all aforementioned dimensions.

\begin{theorem}\label{densethm}
Let $\dim(X) \leq \beta \leq \dim(X)+\dim(Y)$. Then the set  $\mathcal{S}_{\beta}:=\{f\in  \mathcal{C}(X,Y): \dim (Gr(f)) = \beta\}$ is dense in $\mathcal{C}(X,Y).$
\end{theorem}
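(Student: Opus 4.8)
The plan is to fix $f\in\mathcal{C}(X,Y)$ and $\varepsilon>0$ and to manufacture some $h\in\mathcal{S}_\beta$ with $\|f-h\|_\infty<\varepsilon$. The engine is Lemma \ref{lipdim} (and the remark that it is equally valid for the box dimensions) combined with the density of Lipschitz maps noted above. First I would pick $g\in\mathcal{L}ip(X,Y)$ with $\|f-g\|_\infty<\varepsilon/2$, which is possible since $\mathcal{L}ip(X,Y)$ is dense in $\mathcal{C}(X,Y)$. By Lemma \ref{lipdim}, adding the Lipschitz map $g$ to any continuous $\phi:X\to Y$ leaves the graph dimension intact, that is $\dim(Gr(g+\phi))=\dim(Gr(\phi))$. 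Hence it suffices to produce a continuous $\phi$ with $\|\phi\|_\infty<\varepsilon/2$ and $\dim(Gr(\phi))=\beta$, and then set $h=g+\phi$.

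The smallness requirement on $\phi$ comes essentially for free once a single function of the correct dimension is available. Indeed, if $\psi:X\to Y$ is continuous with $\dim(Gr(\psi))=\beta$, then for any $\lambda\neq 0$ the self-map $(x,y)\mapsto(x,\lambda y)$ of $X\times Y$ is bi-Lipschitz and sends $Gr(\psi)$ onto $Gr(\lambda\psi)$, so $\dim(Gr(\lambda\psi))=\beta$ as well, while $\|\lambda\psi\|_\infty=|\lambda|\,\|\psi\|_\infty$. Choosing $\lambda$ small and putting $\phi=\lambda\psi$ completes the reduction. Thus the whole statement collapses to the following existence claim: for every $\beta$ with $\dim(X)\le\beta\le\dim(X)+\dim(Y)$ there is a continuous $\psi:X\to Y$ whose graph has dimension exactly $\beta$.

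This existence claim is the heart of the matter and the step I expect to resist the soft arguments above. The two endpoints are comparatively easy: a constant (or any Lipschitz) $\psi$ has $Gr(\psi)$ isometric to $X$, giving $\beta=\dim(X)$, while a suitably oscillatory map filling $X\times Y$ realizes $\beta=\dim(X)+\dim(Y)$. The real work is to hit every intermediate value of $\beta$, and here I would invoke a tunable fractal construction: fix a unit vector $e\in Y$ and a parametrized family of $\alpha$-fractal (Weierstrass-type) real functions $w_\alpha$ whose graph dimension $\dim(Gr(w_\alpha))$ varies continuously with the scaling parameter, transplant it onto $X$, and spread it across additional coordinate directions of $Y$ to sweep out the full interval $[\dim(X),\dim(X)+\dim(Y)]$. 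The technical obstacle is the dimension bookkeeping for this transplanted construction over a general compact metric domain $X$, where one must verify by explicit covering/measure estimates that the prescribed value $\beta$ is attained exactly; this is the one place where quantitative fractal estimates, rather than the Lipschitz-invariance machinery, are genuinely required.
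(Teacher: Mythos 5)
Your proposal follows essentially the same route as the paper: approximate $f$ by a Lipschitz map $g$, add a small rescaled copy of a continuous function whose graph has dimension $\beta$, and conclude via Lemma \ref{lipdim} together with the bi-Lipschitz invariance of dimension under scaling. The existence claim you isolate as the heart of the matter --- that for every admissible $\beta$ there is a (non-vanishing) continuous $\psi$ with $\dim(Gr(\psi))=\beta$ --- is precisely the step the paper also leaves unproved, writing only ``we consider a non-vanishing function $h\in\mathcal{S}_{\beta}$'', so your treatment is no less complete than the published one.
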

\begin{proof}
   Let $f\in \mathcal{C}(X,Y)$ and $\epsilon>0.$ Using the density of $\mathcal{L}ip(X,Y)$ in $\mathcal{C}(X,Y)$, there exists $g$ in $\mathcal{L}ip (X,Y)$ such that $$ \|f-g\|_{\infty,Y} < \frac{\epsilon}{2}.$$ Further, we consider a non-vanishing function $h \in \mathcal{S}_{\beta}.$ Let $h_*= g +\frac{\epsilon}{2\|h\|_{\infty,Y}}h,$ which immediately gives $$\|g-h_*\|_{\infty,Y} \le  \frac{\epsilon}{2}.$$ This together with Lemma \ref{lipdim} implies that $\dim(Gr(h_*))=\dim(Gr(h))=\beta.$ Hence, we have $h_* \in \mathcal{S}_{\beta}$ and $$ \|f-h_*\|_{\infty,Y} \le  \|f-g\|_{\infty,Y} + \|g-h_*\|_{\infty,Y} < \epsilon .$$ Thus, the proof of the theorem is complete.
\end{proof}   
To the best our knowledge, the univariate version of the next theorem is well-known, however, we could not find a proof of the theorem in bivariate setting. Hence, we write a detailed proof of it. 

\begin{theorem}\label{Rudinthm}
             Let $\big(f_k\big)$ be a sequence of differentiable functions on $\rectangle$. Assume that for some $(x_0,y_0) \in \rectangle,$ the sequences  $\big(f_{k}(x_{0},.)\big)$ and $\big(f_{k}(.,y_0)\big)$ converges uniformly on $ [c, d]$ and $[a,b]$ respectively. If $(D^{(1,1)}f _k)$ converges uniformly on $\rectangle,$ then $\big(f_k\big)$ converges uniformly on $\rectangle$ to a function $f$, and
             $$D^{(1,1)}f(\boldsymbol{x})=\lim_{k \to \infty }D^{(1,1)} f_k(\boldsymbol{x}),$$ for every $\boldsymbol{x} \in \rectangle.$
             \end{theorem}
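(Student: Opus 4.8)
The plan is to reduce the bivariate statement to the familiar univariate facts about interchanging limits with integration and with differentiation, by exploiting the two–dimensional Newton–Leibniz formula. The starting point is the identity
\[
f_k(x,y) = f_k(x_0,y) + f_k(x,y_0) - f_k(x_0,y_0) + \int_{x_0}^{x}\!\int_{y_0}^{y} D^{(1,1)}f_k(t,s)\,ds\,dt,
\]
valid for each $k$ because the mixed partial $D^{(1,1)}f_k$ is continuous on $\rectangle$: integrating first in $s$ and then in $t$ and using the fundamental theorem of calculus at each stage (the equality of mixed partials lets one read $D^{(1,1)}f_k$ as a derivative in $s$ followed by one in $t$) reconstructs $f_k$ from its mixed partial together with the three ``boundary'' values. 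Securing and carefully justifying this representation is the technical heart of the argument.

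First I would fix this representation and examine its four terms separately. By hypothesis $\big(f_k(x_0,\cdot)\big)$ and $\big(f_k(\cdot,y_0)\big)$ converge uniformly, to functions $\varphi$ on $[c,d]$ and $\psi$ on $[a,b]$ respectively, and the numerical sequence $\big(f_k(x_0,y_0)\big)$ converges to some limit $c_0$. Writing $g:=\lim_{k\to\infty}D^{(1,1)}f_k$ for the uniform limit of the mixed partials (continuous, being a uniform limit of continuous functions), the double integral converges uniformly in $(x,y)$: once $\|D^{(1,1)}f_k-g\|_{\infty}<\epsilon$, the difference of the corresponding integrals is bounded by $\epsilon\,(b-a)(d-c)$ uniformly on $\rectangle$, the domain of integration being contained in $\rectangle$. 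Adding the four uniformly convergent pieces shows that $\big(f_k\big)$ converges uniformly on $\rectangle$ to
\[
f(x,y) = \varphi(y) + \psi(x) - c_0 + \int_{x_0}^{x}\!\int_{y_0}^{y} g(t,s)\,ds\,dt.
\]

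It remains to identify $D^{(1,1)}f$, and here I would simply apply the mixed partial to the displayed formula for $f$. The terms $\varphi(y)$ and $\psi(x)$ depend on a single variable and the constant $-c_0$ is annihilated by either differentiation, so all three vanish under $D^{(1,1)}$; for the integral term, continuity of $g$ permits a double application of the fundamental theorem of calculus — differentiating in $x$ collapses the outer integral and differentiating in $y$ the inner one — returning exactly $g(x,y)$. Hence $D^{(1,1)}f = g = \lim_{k\to\infty}D^{(1,1)}f_k$ at every point of $\rectangle$, as claimed. The principal obstacle is genuinely the first step: both the integral representation and the differentiation of its limit rest on the mixed partials $D^{(1,1)}f_k$ being continuous, so that the iterated fundamental theorem of calculus applies and the order of differentiation is immaterial. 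Under the bare hypothesis of differentiability one would instead be forced to run a two–dimensional analogue of Rudin's mean-value argument on second differences, which is considerably more delicate, so I would work under the continuity of the mixed partials that the paper's $\mathcal{C}^{m,n}$ setting supplies.
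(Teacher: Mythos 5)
Your argument is correct as far as it goes, but it takes a genuinely different route from the paper and, as you yourself flag, proves the theorem only under an added hypothesis. The paper runs the classical Rudin scheme directly in two variables: it shows $(f_k)$ is uniformly Cauchy by applying the mean value theorem for second differences (\cite[Theorem 9.40]{Rudin}) to $f_k-f_m$, which bounds the mixed second difference by $hk\,\bigl\|D^{(1,1)}(f_k-f_m)\bigr\|_\infty$, and then recovers $f_k-f_m$ itself from that second difference together with its values on the lines $x=x_0$ and $y=y_0$; the identification of $D^{(1,1)}f$ is left as ``routine.'' That argument needs only the pointwise existence of the relevant partials. Your route instead reconstructs $f_k$ from $D^{(1,1)}f_k$ via the two-dimensional Newton--Leibniz formula and passes to the limit term by term; this is more transparent, yields an explicit formula for $f$, and gives a complete (rather than omitted) proof that $D^{(1,1)}f=\lim_k D^{(1,1)}f_k$. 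The price is that both the integral representation and the final differentiation require $D^{(1,1)}f_k$ to be continuous, whereas the statement assumes only differentiability; so what you prove is the theorem for $f_k\in\mathcal{C}^{1,1}(\rectangle)$, not the statement verbatim. In the one place the paper uses the theorem (Theorem \ref{mainthm}), the $f_k$ are double indefinite integrals of continuous functions, so their mixed partials are continuous and your version suffices there; but at the stated level of generality the second-difference mean value argument --- which is no more than a direct transcription of Rudin's univariate proof and less delicate than you suggest --- is the one that closes the case.
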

\begin{proof}
Let  $\epsilon>0$. Since $(D^{(1,1)}f _k)$ converges uniformly, there exists $N_1 \in \mathbb{N}$ such that
$$|D^{(1,1)} f_{k}(\boldsymbol{x})-D^{(1,1)} f_{m}(\boldsymbol{x})| < \frac{\epsilon}{4(b-a)(d-c)}, ~~\forall~\boldsymbol{x} \in \rectangle, ~k,m \ge N_1.$$

By the mean-value theorem, see, for instance, \cite[Theorem $9.40$]{Rudin}, we have
\begin{equation}\label{visha2}
 \begin{aligned}
 &\big|f_{k}(x+h,y+k)-f_{m}(x+h, y+k)-f_{k}(x+h,y)+f_{m}(x+h, y)-f_{k}(x,y+k)+f_{m}(x, y+k)\\ & +f_{k}(x,y)-f_{m}(x, y)\big| \\= ~ & hk~ \big|D^{(1,1)}(f_k-f_{m})(t,s)\big| \\    \leq ~ & hk \max_{(t,s)\in \rectangle}\big|D^{(1,1)}f_k(t,s)- D^{(1,1)}f_{m}(t,s)\big| \\   \leq ~  & \frac{\epsilon}{4(b-a)(d-c)}hk  \\    \leq ~ & \frac{\epsilon}{4}.  
 \end{aligned}
\end{equation}
By the hypothesis for $(x_{0},y_{0}) \in \rectangle,$
one can choose $ N_0 ~(>N_1)~\in \mathbb{N}$ such that 
    $$|f_k(x_{0},y)-f_{m}(x_{0},y)|< \frac{\epsilon}{4}  ~~\forall~ k,m\geq N_0$$
   and
    $$|f_k(x,y_{0})-f_{m}(x,y_{0})| < \frac{\epsilon}{4} ~~ \forall~ k,m\geq   N_0.$$ 
    
Now, using the above estimates and Equation \ref{visha2} we have    

\begin{equation*}
\begin{aligned}
 |f_k(x,y)-f_{m}(x,y)| \leq&  \frac{\epsilon}{4}+|f_k(x,y_{0})-f_{m}(x,y_{0})|+|f_k(x_{0},y)-f_{m}(x_{0},y)|\\&+|f_k(x_{0},y_{0})-f_{m}(x_{0},y_{0})| \\ < &  \frac{\epsilon}{4}+\frac{\epsilon}{4}+\frac{\epsilon}{4}+\frac{\epsilon}{4}\\ < & \epsilon, 
\end{aligned}
\end{equation*}
 for every $(x,y) \in \rectangle$ and $k,m\geq   N_0.$ This immediately confirms the uniform convergence of $(f_k).$ The rest part follows by routine calculations, hence omitted.

\end{proof}
             
 \begin{lemma}\label{newlem2}
 Let $f:I \rightarrow \mathbb{R} $ be a Lipschitz map and $g:J \rightarrow \mathbb{R}$ be a continuous function. A mapping $h: \rectangle \to \mathbb{R}$ defined by $$h(x,y)=f(x)+g(y),$$ then $$dim_H (Gr(h)) = \dim_H (Gr(g))+1.$$
 \end{lemma}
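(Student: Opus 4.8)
The plan is to decompose $h$ into a Lipschitz summand depending only on $x$ and a continuous summand depending only on $y$, exploit the Lipschitz invariance recorded in Lemma \ref{lipdim}, and then identify the surviving graph as a Cartesian product with the interval $I$, whose dimension contribution can be pinned down exactly.

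First I would introduce $\tilde f,\tilde g:\rectangle\to\mathbb{R}$ by $\tilde f(x,y)=f(x)$ and $\tilde g(x,y)=g(y)$, so that $h=\tilde f+\tilde g$. Since $f$ is Lipschitz on $I$ and $\tilde f$ is constant in the $y$-direction, $\tilde f$ is Lipschitz on $\rectangle$, while $\tilde g$ is merely continuous. Lemma \ref{lipdim} (valid for the Hausdorff dimension) then yields
\[
\dim_H(Gr(h)) = \dim_H\big(Gr(\tilde f+\tilde g)\big) = \dim_H\big(Gr(\tilde g)\big),
\]
so the computation is reduced to determining $\dim_H(Gr(\tilde g))$.

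Next I would observe that
\[
Gr(\tilde g)=\{(x,y,g(y)):(x,y)\in\rectangle\}
\]
is, after a harmless permutation of coordinates (an isometry of $\mathbb{R}^3$, hence dimension preserving), exactly the product $I\times Gr(g)$, where $Gr(g)\subset\mathbb{R}^2$ is compact. I would then invoke the two-sided product estimate for the Hausdorff dimension (see \cite{Fal}),
\[
\dim_H(E)+\dim_H(F)\ \le\ \dim_H(E\times F)\ \le\ \overline{\dim}_B(E)+\dim_H(F),
\]
with $E=I$ and $F=Gr(g)$. Because $I$ is an interval we have $\dim_H(I)=\overline{\dim}_B(I)=1$, so both the lower and the upper bound collapse to $\dim_H(Gr(g))+1$. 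Hence $\dim_H(Gr(\tilde g))=\dim_H(Gr(g))+1$, and combining with the reduction above completes the proof.

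The only genuinely delicate point is this last step. The product formula for the Hausdorff dimension is, in general, only a pair of inequalities and can be strict, so one cannot simply assert additivity. What rescues equality here is that one factor is an interval, for which the Hausdorff and upper box dimensions agree; this coincidence is precisely what forces the two bounds to meet. I would therefore be careful to cite the correct two-sided estimate rather than a (generally false) additivity statement, and to note that compactness of the graphs guarantees the Borel measurability needed for that estimate.
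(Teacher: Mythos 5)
Your argument is correct and follows essentially the same route as the paper's one-line proof: both reduce $Gr(h)$ to the set $\{(x,y,g(y))\}$ by removing the Lipschitz summand $f(x)$ (you via Lemma \ref{lipdim}, the paper via an explicit bi-Lipschitz map, which amounts to the same thing), and then identify that set as $I\times Gr(g)$ up to a coordinate permutation. Your treatment of the final step is actually more complete than the paper's, which leaves the product-dimension computation implicit; your use of the two-sided estimate $\dim_H(E)+\dim_H(F)\le\dim_H(E\times F)\le\overline{\dim}_B(E)+\dim_H(F)$ with $E=I$ is exactly the right way to close it.
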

 \begin{proof}
 Proof follows by defining a bi-Lipschitz mapping from $Gr(h)$ to the set $\{(x,y,g(y)):x \in I,~ y \in J \}.$
 \end{proof}
  Here, let us recall some dimensional results for univariate functions. Mauldin and Williams \cite{RD} considered the following class of functions:
 $$W_{b}(x):=\sum_{n=-\infty}^{\infty}b^{-\alpha n}[\phi(b^{n}x+\theta_{n})-\phi({\theta_{n}})],$$ where $\theta_{n}$ is an arbitrary real number, $\phi$ is a periodic function with period one and $ b > 1,$ $0<\alpha<1.$ They showed that for a large enough  $b$ there exists a constant $C>0$ such that $\dim_H (Gr(W_{b})$ is bounded below by $2-\alpha-(C/\ln b).$

 Further, a significant progress in dimension theory of functions is contributed by Shen \cite{Shen} for the following class of functions:
 $$f^{\phi}_{\lambda,b}(x):=\sum_{n=0}^{\infty}\lambda^{n} \phi(b^{n}x)$$
 where $b\geq 2$ and $\phi$ is a real-valued, $\mathbb{Z}$-periodic, non-constant, $C^{2}$-function defined on $\mathbb{R}$. He proved that there exists a constant $K_{0}$ depending on $\phi$ and $b$ such that if $1< \lambda b <K_{0}$ then $$\dim_H (Gr(f^{\phi}_{\lambda,b})= 2+ \frac{\log\lambda}{\log b}.$$

 For $f \in \mathcal{C}^{1,1}(\rectangle),$ we get $\dim(Gr(f))=2.$ However, no conclusion can be drawn for dimensions of its partial derivatives. This is evident from the following example: let  Weierstrass-type nowhere differentiable continuous function $W:I  \to \mathbb{R}$ as in \cite{Shen} with $1 \le  \dim( Gr(W))\le  2$. Now, we define $h:\rectangle \to \mathbb{R}$ by $$h(x,y)= W(x)+y.$$ Here, by Lemma \ref{newlem2}, we obtain $2 \le  \dim( Gr(h))=\dim( Gr(W))+1 \le  3.$ Then for the function $f$ defined by $$f(x,y):=\int\limits_{a}^{x} \int\limits_{c}^{y}h(t,s)dtds,$$ we have $\dim (Gr(f)) =2$ and $2 \le  \dim (Gr(D^{(1,1)}f)) =\dim (Gr(h))\le 3.$
 \begin{theorem}\label{mainthm}
              Let $f \in \mathcal{C}^{1,1}(\rectangle)$ such that $\dim (Gr(D^{(1,1)}f)) =\beta$ for some $2 \le \beta \le 3.$ Then we have a sequence $(f_k)$ in $\mathcal{C}^{1,1}(\rectangle)$ such that $\dim (Gr(D^{(1,1)} f_k)) =\beta$ and $f_k \to f$ uniformly on $\rectangle.$ 
             \end{theorem}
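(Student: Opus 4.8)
The plan is to push the whole problem down to the mixed partial $g := D^{(1,1)}f$, build dimension-$\beta$ approximants of $g$ at the level of $\cC(\rectangle)$, and then lift them back to $\cC^{1,1}(\rectangle)$ by double antidifferentiation, correcting the boundary data with a single fixed function so that the limit is $f$ rather than some other antiderivative. The point is that a perturbation that leaves the mixed partial untouched in dimension — indeed untouched entirely on the boundary — is exactly what is needed.

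First I would set $g := D^{(1,1)}f \in \cC(\rectangle)$, so that $\dim(Gr(g)) = \beta$ with $2 \le \beta \le 3$. Applying Theorem \ref{densethm} with $X = \rectangle$ (so $\dim(X)=2$) and $Y = \R$ (so $\dim(Y)=1$), the hypothesis $\dim(X) \le \beta \le \dim(X)+\dim(Y)$ reads precisely $2 \le \beta \le 3$, and the theorem guarantees that $\mathcal{S}_{\beta}$ is dense in $\cC(\rectangle)$. Hence there is a sequence $(g_k)$ of continuous functions with $\dim(Gr(g_k)) = \beta$ for every $k$ and $\|g_k - g\|_\infty \to 0$. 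Since Theorem \ref{densethm} holds for all the dimensions under consideration, this step, and therefore the whole argument, is insensitive to which notion of $\dim$ is used.

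Next I would reconstruct the approximants. Put $F(x,y) := \int_a^x \int_c^y g(t,s)\,ds\,dt$ and $\psi := f - F$. Because $g$ is continuous, the iterated integral $F$ has continuous partials through order $(1,1)$ with $D^{(1,1)}F = g$, so $F \in \cC^{1,1}(\rectangle)$, whence $\psi \in \cC^{1,1}(\rectangle)$ and $D^{(1,1)}\psi \equiv 0$. I then define
$$f_k(x,y) := \int_a^x \int_c^y g_k(t,s)\,ds\,dt + \psi(x,y).$$
Each $f_k$ lies in $\cC^{1,1}(\rectangle)$, since the double integral of the continuous function $g_k$ has continuous partials through order $(1,1)$, and $\psi$ does as well. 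Moreover $D^{(1,1)}f_k = g_k + 0 = g_k$, so $\dim(Gr(D^{(1,1)}f_k)) = \dim(Gr(g_k)) = \beta$ \emph{exactly}, for every $k$. The fixed correction $\psi$ does all the boundary bookkeeping while contributing nothing to the mixed partial.

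Finally I would verify uniform convergence. Subtracting, $f_k - f = \int_a^x \int_c^y (g_k - g)\,ds\,dt$, so $\|f_k - f\|_\infty \le (b-a)(d-c)\,\|g_k - g\|_\infty \to 0$. This last estimate is the only computational step, and it is routine once $g_k \to g$ uniformly. The genuine conceptual obstacle is the first step: producing continuous $g_k$ approximating $g$ while keeping the graph dimension pinned \emph{exactly} at $\beta$ (rather than merely bounding it), and this is precisely what the density statement of Theorem \ref{densethm} delivers; everything after it is controlled antidifferentiation.
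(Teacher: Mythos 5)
Your proposal is correct, and it follows the same basic strategy as the paper: apply Theorem \ref{densethm} to $g=D^{(1,1)}f$ to get uniform approximants $g_k$ with $\dim(Gr(g_k))=\beta$, and recover $f_k$ by double antidifferentiation so that $D^{(1,1)}f_k=g_k$. The difference is your correction term $\psi=f-F$, where $F(x,y)=\int_a^x\int_c^y g(t,s)\,ds\,dt$, and it is not a cosmetic one. The paper defines $f_k$ as the bare iterated integral of $g_k$ and then invokes Theorem \ref{Rudinthm}; but that theorem only yields that $(f_k)$ converges uniformly to \emph{some} function whose mixed partial is $D^{(1,1)}f$, and in fact the limit of the paper's $f_k$ is $F$, which equals $f(x,y)-f(a,y)-f(x,c)+f(a,c)$ and hence differs from $f$ unless $f$ vanishes on the edges $x=a$ and $y=c$. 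Your $\psi$ absorbs exactly this discrepancy: it lies in $\mathcal{C}^{1,1}(\rectangle)$, has $D^{(1,1)}\psi\equiv 0$ so the graph dimension of the mixed partial is untouched, and forces $f_k-f=\int_a^x\int_c^y(g_k-g)$, from which the uniform bound $\|f_k-f\|_\infty\le(b-a)(d-c)\|g_k-g\|_\infty$ follows directly and makes the appeal to Theorem \ref{Rudinthm} unnecessary. In short, your route is both more elementary (no mean-value/Cauchy argument needed) and repairs a genuine gap in the paper's own proof of the claim $f_k\to f$.
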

             \begin{proof}
             In view of Theorem \ref{densethm}, there exists a sequence $(g_k)$ in $ \mathcal{C}(\rectangle)$ such that $\dim (Gr(g_k)) =\beta$ and $g_k \to D^{(1,1)}f$ uniformly on $\rectangle.$ Further, let us consider a function $f_k: \rectangle \to \mathbb{R}$ defined by $$f_k(x,y):=\int\limits_{a}^{x}\int\limits_{c}^{y} g_k(t,s)dtds.$$ Then $D^{(1,1)}f_k=g_k$ and $(D^{(1,1)}f_k) \to D^{(1,1)}f$ uniformly. Next, we have that $\big(f_k(a,y)\big) \to 0$ and $\big(f_k(x,c)\big) \to 0$ uniformly on $I$ and $J$ respectively. Now, Theorem \ref{Rudinthm} provides the proof.
             \end{proof}
      
      %The next theorem deals with both dimension preserving and shape %preserving approximation of a continuous function.
     % \begin{theorem}\label{mainthm1}
                   % Suppose $f$ is a continuously differentiable function with $\dim (Gr(\partial_{xy}f)) =\beta$ for some $2 \le \beta \le 3$ and $f(x,y)\ge 0, \forall x,y \in [0,1].$ Then there exists a sequence of continuously differentiable functions $(f_k)$ satisfying $\dim \big(Gr($D^{(1,1)}f_k)\big) =\beta$ and $f_k(x,y)\ge 0, \forall x \in [0,1]$, and $(f_k)$ converges uniformly to $f$. 
     % \end{theorem}
      %\begin{proof}
     % The proof uses arguments similar to those given in Theorems \ref{densethm} and \ref{mainthm}, and is omitted.
     % \end{proof}

    \begin{theorem}
    Let $f \in \mathcal{C}(\rectangle)$ with $f(\boldsymbol{x}) \ge 0 ~\forall~\boldsymbol{x} \in \rectangle.$ Then, for a given $\epsilon >0,$ there exists $g \in \mathcal{S}_{\beta}$ satisfying the following: $$g(\boldsymbol{x}) \ge 0 ~\forall~\boldsymbol{x} \in \rectangle ~\text{and}~ \|f-g\|_{\infty} < \epsilon.$$
    \end{theorem}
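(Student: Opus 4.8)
The plan is to mimic the proof of Theorem~\ref{densethm}, introducing two adjustments that respect the sign constraint: I would make the Lipschitz approximant nonnegative by passing to its positive part, and I would take the dimension-carrying perturbation to be strictly positive. First I would fix a non-vanishing $h\in\mathcal{S}_{\beta}$, exactly as in the proof of Theorem~\ref{densethm}. Since $\rectangle$ is connected and $h$ is continuous and never zero, $h$ has constant sign; replacing $h$ by $-h$ if necessary (a reflection in the last coordinate is bi-Lipschitz, hence leaves $\dim(Gr(h))$ unchanged), I may assume $h(\boldsymbol{x})>0$ for every $\boldsymbol{x}\in\rectangle$.

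Next, using the density of $\mathcal{L}ip(\rectangle,\mathbb{R})$ in $\mathcal{C}(\rectangle)$, I would choose a Lipschitz map $q$ with $\|f-q\|_{\infty}<\epsilon/4$ and set $p:=\max\{q,0\}$. The positive-part operation is $1$-Lipschitz on the real line, so $p$ is again Lipschitz, and clearly $p\ge 0$. The one pointwise estimate to verify, and the only place where the hypothesis $f\ge 0$ enters, is that $|f(\boldsymbol{x})-p(\boldsymbol{x})|\le |f(\boldsymbol{x})-q(\boldsymbol{x})|$ for every $\boldsymbol{x}\in\rectangle$: when $q(\boldsymbol{x})\ge 0$ this is an equality, while when $q(\boldsymbol{x})<0$ one has $0\le f(\boldsymbol{x})=f(\boldsymbol{x})-0\le f(\boldsymbol{x})-q(\boldsymbol{x})$. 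Consequently $\|f-p\|_{\infty}<\epsilon/4$.

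Finally I would define $g:=p+\tfrac{\epsilon}{2\|h\|_{\infty}}\,h$. Then $g>0$ on $\rectangle$, since $p\ge 0$ and the second summand is strictly positive, and $\|f-g\|_{\infty}\le\|f-p\|_{\infty}+\tfrac{\epsilon}{2\|h\|_{\infty}}\|h\|_{\infty}<\epsilon/4+\epsilon/2<\epsilon$. For the dimension, $p$ is Lipschitz while $\tfrac{\epsilon}{2\|h\|_{\infty}}h$ is continuous with $\dim\big(Gr(\tfrac{\epsilon}{2\|h\|_{\infty}}h)\big)=\dim(Gr(h))=\beta$ (rescaling the last coordinate is bi-Lipschitz and so preserves graph dimension). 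Hence Lemma~\ref{lipdim}, together with its box-dimension version recorded in the Remark, gives $\dim(Gr(g))=\beta$, that is, $g\in\mathcal{S}_{\beta}$, which finishes the argument.

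I expect the main obstacle to be the tension between the two requirements rather than any single computation: forcing the graph dimension up to $\beta$ demands a genuinely rough summand, yet roughness threatens the sign constraint. The resolution is to arrange both building blocks to be nonnegative — the Lipschitz part through the positive-part truncation, which crucially does \emph{not} increase the approximation error precisely because $f\ge 0$, and the dimension-carrying part by taking $h$ strictly positive — so that their sum simultaneously inherits the constraint from both pieces and the dimension from the rough one.
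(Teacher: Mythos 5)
Your argument is correct, but it takes a different route from the paper's. The paper's proof is a two-line reduction to Theorem~\ref{densethm}: it picks any $h\in\mathcal{S}_{\beta}$ with $\|f-h\|_{\infty}<\epsilon/2$ and sets $g:=h+\tfrac{\epsilon}{2}$; nonnegativity is then immediate from $g(\boldsymbol{x})\ge f(\boldsymbol{x})-\|f-h\|_{\infty}+\tfrac{\epsilon}{2}>f(\boldsymbol{x})\ge 0$, and Lemma~\ref{lipdim} (applied to the constant perturbation) keeps $g$ in $\mathcal{S}_{\beta}$. In other words, the paper exploits the elementary fact that any uniform $\tfrac{\epsilon}{2}$-approximant of a nonnegative function, shifted up by $\tfrac{\epsilon}{2}$, dominates that function --- and as a by-product its $g$ actually satisfies the stronger conclusion $g\ge f$. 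You instead unroll the proof of Theorem~\ref{densethm} and rebuild the approximant so that both summands are individually nonnegative: the truncation $p=\max\{q,0\}$ with the observation that $|f-p|\le|f-q|$ pointwise (which is where $f\ge 0$ enters), and the sign normalization of the non-vanishing $h\in\mathcal{S}_{\beta}$ via a reflection. All of your steps check out --- the positive part of a Lipschitz function is Lipschitz, reflection and positive rescaling of the last coordinate are bi-Lipschitz and hence preserve graph dimension, and Lemma~\ref{lipdim} then gives $\dim(Gr(g))=\beta$. Your version is longer but more self-contained and makes the role of the hypothesis $f\ge 0$ very explicit; the paper's version is shorter and yields the extra one-sided property $g\ge f$ for free, foreshadowing Theorem~\ref{BSOSA}.
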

    \begin{proof}
    Let $\epsilon >0.$ Theorem \ref{densethm} yields an element $h \in  \mathcal{S}_{\beta}$ such that $$ \|f-h\|_{\infty} < \frac{\epsilon}{2}.$$ We define $$g(\boldsymbol{x}):=h(\boldsymbol{x})+ \frac{\epsilon}{2}, ~\forall~ \boldsymbol{x} \in \rectangle.$$ Then, by Lemma \ref{lipdim}, $g \in \mathcal{S}_{\beta},$ and by routine calculations, we get  $$g(\boldsymbol{x})=h(\boldsymbol{x})-f(\boldsymbol{x})+f(\boldsymbol{x})+\frac{\epsilon}{2} \ge -\|f-h\|_{\infty} +f(\boldsymbol{x})+ \frac{\epsilon}{2}> f(\boldsymbol{x}) \ge 0.$$
    Furthermore, one has $$\|f- g\|_{\infty} \le \|f- h\|_{\infty} +\|h- g\|_{\infty} < \epsilon,$$ hence the proof. 
    \end{proof}
    
    \begin{theorem}
    Let $f:\rectangle \to \mathbb{R}$ be a $(m,n)$-convex function such that $f(a,y)=f(x,c)=0, ~\forall~x \in I, ~y \in J.$ Then for $\epsilon >0,$ there exists $(m,n)$-convex function $g$ such that $D^{(m,n)}g  \in \mathcal{S}_{\beta}$ and $\|f-g\|_{\infty} < \epsilon.$
   \end{theorem}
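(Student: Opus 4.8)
The plan is to reduce the statement to the positivity-constrained approximation theorem established just above, and then to recover $g$ from its top-order partial derivative. Interpreting $(m,n)$-convexity through the sign of the highest mixed derivative, the hypothesis gives that $\alpha := D^{(m,n)}f$ is continuous with $\alpha \ge 0$ on $\rectangle$. First I would feed $\alpha$ into the preceding theorem: for a tolerance $\delta>0$ to be fixed later, it produces $\phi \in \mathcal{S}_{\beta}$ with $\phi \ge 0$ on $\rectangle$ and $\|\alpha-\phi\|_{\infty}<\delta$. The reason for insisting on the nonnegative version is that $\phi$ is designed to become $D^{(m,n)}g$: the constraint $\phi\ge 0$ is exactly what forces $g$ to be $(m,n)$-convex, while $\phi\in\mathcal{S}_{\beta}$ is exactly the prescribed fractal dimension of $D^{(m,n)}g$.

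Next, instead of integrating $\phi$ from the corner, I would correct $f$ by the top-order defect. Put $\eta := \phi-\alpha$, so $\|\eta\|_{\infty}<\delta$, and define the correction by the two-dimensional Cauchy repeated integral
\begin{equation*}
w(x,y) := \int_a^x \frac{(x-u)^{m-1}}{(m-1)!}\int_c^y \frac{(y-v)^{n-1}}{(n-1)!}\,\eta(u,v)\,dv\,du,
\end{equation*}
and set $g := f+w$. Since $\eta$ is continuous, $w\in\mathcal{C}^{m,n}(\rectangle)$ and differentiating under the integral sign (justified by continuity and Fubini) yields $D^{(m,n)}w=\eta$; hence $D^{(m,n)}g = \alpha+\eta = \phi$. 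Thus $D^{(m,n)}g=\phi\ge 0$, so $g$ is $(m,n)$-convex, and $D^{(m,n)}g=\phi\in\mathcal{S}_{\beta}$, as required. Moreover the lower limits $a$ and $c$ force $w(a,y)=0=w(x,c)$, so $g$ inherits the normalization $g(a,y)=g(x,c)=0$; this is presumably why the hypotheses single out these two edges, so that the approximant remains inside the same constrained class as $f$.

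It then remains to make $\|f-g\|_{\infty}=\|w\|_{\infty}$ small, and estimating the repeated integral crudely gives
\begin{equation*}
\|w\|_{\infty} \le \|\eta\|_{\infty}\cdot\frac{(b-a)^m}{m!}\cdot\frac{(d-c)^n}{n!} < \delta\,\frac{(b-a)^m(d-c)^n}{m!\,n!}.
\end{equation*}
Choosing $\delta := \dfrac{m!\,n!\,\epsilon}{2(b-a)^m(d-c)^n}$ at the outset then yields $\|f-g\|_{\infty}<\epsilon$, which completes the argument.

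The step I expect to be the main obstacle is not any individual estimate but the decision to correct $f$ additively rather than to reconstruct $g$ by integrating $\phi$ outright. Integrating $\phi$ from the two edges would also give an $(m,n)$-convex function vanishing on $x=a$ and $y=c$ with the correct top derivative, but for $m,n\ge 2$ its distance from $f$ cannot be controlled: the two edge conditions do not determine $f$ from $D^{(m,n)}f$, since the lower-order boundary data are not prescribed, so the reconstructed function need not be close to $f$. Writing $g=f+w$ avoids this, because then $\|f-g\|$ depends only on the top-order defect $\eta$, which the positivity theorem makes arbitrarily small, and $D^{(m,n)}g$ equals $\phi$ \emph{exactly} — so no appeal to Lipschitz invariance of dimension is needed to keep $D^{(m,n)}g$ inside $\mathcal{S}_{\beta}$.
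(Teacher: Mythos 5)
Your argument is correct and, on the key point, more careful than the paper's own proof, but it takes a genuinely different route. The paper applies Theorem \ref{densethm} directly to $D^{(m,n)}f$ to obtain $h\in\mathcal{S}_{\beta}$ with $\|D^{(m,n)}f-h\|_{\infty}<\epsilon/\big((b-a)^m(d-c)^n\big)$, and then \emph{defines} $g$ as the full iterated integral of $h$ from the corner $(a,c)$; the bound $\|f-g\|_{\infty}<\epsilon$ is then asserted, which implicitly uses that $f$ itself is recovered as the iterated integral of $D^{(m,n)}f$. As you observe, that identity holds for $m=n=1$ under the stated edge conditions $f(a,y)=f(x,c)=0$, but for $m,n\ge 2$ it requires additional vanishing boundary data that the hypotheses do not supply --- exactly the obstacle you name. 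Your additive correction $g=f+w$, with $w$ the Cauchy repeated integral of the top-order defect $\eta=\phi-D^{(m,n)}f$, sidesteps this: $\|f-g\|_{\infty}=\|w\|_{\infty}$ is controlled by $\|\eta\|_{\infty}$ alone, and $D^{(m,n)}g=\phi$ exactly, so membership in $\mathcal{S}_{\beta}$ is automatic. You also route the approximant through the preceding nonnegativity theorem so that $\phi\ge 0$, a point the paper's proof does not address at all (its $h$ need not be nonnegative, so the nonnegativity of $D^{(m,n)}g$ is not secured there).

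One caveat applies to both proofs but is worth flagging in yours: the definition of $(m,n)$-convexity in the introduction quantifies $D^{(k,l)}f\ge 0$ over all $0\le k\le m$, $0\le l\le n$, not only the top pair. Under that reading you must also check $D^{(k,l)}g\ge 0$ for the lower-order derivatives; since $D^{(k,l)}w$ is an iterated integral of $\eta$, which changes sign, $D^{(k,l)}g=D^{(k,l)}f+D^{(k,l)}w$ can dip slightly below zero wherever $D^{(k,l)}f$ vanishes. This gap disappears if $(m,n)$-convexity is read as a condition on $D^{(m,n)}$ only, and it is present (in worse form) in the paper's own argument, so it does not count against your approach relative to the source.
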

   \begin{proof}
   Let $\epsilon >0.$ Using Theorem \ref{densethm}, there exists $h \in \mathcal{S}_{\beta}$ such that $\|D^{(m,n)}f-h\| < \frac{\epsilon}{(b-a)^m(d-c)^n}.$ 
   By choosing $$g(x,y):= \int_{a}^{x} \int_c^y \dots  \int_{a}^{x_{m-1}} \int_{c}^{y_{n-1}} h(x_m,y_n)dx_{m}dy_{n}\dots dx_1 dy_1,$$ we have 
   \[
   \|f-g\| = \sup_{(x,y) \in \rectangle} \Big\{\Big| f- \int_{a}^{x} \int_c^y \dots  \int_{a}^{x_{m-1}} \int_{c}^{y_{n-1}} h(x_m,y_n)dx_{m}dy_{n}\dots dx_1 dy_1\Big|\Big\} < \epsilon,
   \]
   proving the assertion.
   \end{proof}
   
    \begin{theorem}\label{BSOSA}
    Let $f \in \mathcal{C}(\rectangle).$ Then, for $\epsilon >0$ there exists $g \in \mathcal{S}_{\beta}$ such that $$g(\boldsymbol{x}) \le f(\boldsymbol{x})~ \forall ~\boldsymbol{x} \in \rectangle~ \text{and}~ \|f-g\|_{\infty} < \epsilon.$$
    \end{theorem}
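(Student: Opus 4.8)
The plan is to mimic the structure of the immediately preceding theorem, but to shift the dimension-preserving approximant \emph{downward} rather than upward so that the resulting function lies below $f$. The two ingredients I would use are Theorem \ref{densethm}, which supplies an element of $\mathcal{S}_{\beta}$ arbitrarily close to any prescribed continuous function on $\rectangle$, and Lemma \ref{lipdim} together with the remark extending it to box dimensions, which guarantees that perturbing by a Lipschitz function, in particular by a constant, leaves the graph dimension unchanged.

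First I would apply Theorem \ref{densethm} to $f$ itself: since $\mathcal{S}_{\beta}$ is dense in $\mathcal{C}(\rectangle)$, there exists $h \in \mathcal{S}_{\beta}$ with $\|f-h\|_{\infty} < \frac{\epsilon}{2}$. Next I would define the candidate approximant by subtracting a constant, namely $g(\boldsymbol{x}) := h(\boldsymbol{x}) - \frac{\epsilon}{2}$. Because a constant map is Lipschitz, Lemma \ref{lipdim} gives $\dim(Gr(g)) = \dim(Gr(h)) = \beta$, so $g \in \mathcal{S}_{\beta}$ as required.

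It then remains to verify the two quantitative conclusions. For the one-sided bound I would write, for every $\boldsymbol{x} \in \rectangle$,
\[
g(\boldsymbol{x}) = \big(h(\boldsymbol{x}) - f(\boldsymbol{x})\big) + f(\boldsymbol{x}) - \frac{\epsilon}{2} \le \|h-f\|_{\infty} + f(\boldsymbol{x}) - \frac{\epsilon}{2} < f(\boldsymbol{x}),
\]
using $\|h-f\|_{\infty} < \frac{\epsilon}{2}$, which yields $g(\boldsymbol{x}) \le f(\boldsymbol{x})$ throughout $\rectangle$. For the error estimate the triangle inequality gives $\|f-g\|_{\infty} \le \|f-h\|_{\infty} + \|h-g\|_{\infty} < \frac{\epsilon}{2} + \frac{\epsilon}{2} = \epsilon$, since $\|h-g\|_{\infty} = \frac{\epsilon}{2}$.

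As for difficulty, I do not anticipate any genuine obstacle, since the argument is the exact mirror image of the previous theorem with the sign of the constant shift reversed. The only point requiring care is the direction of the inequality in the one-sided estimate: one must shift \emph{down} by $\frac{\epsilon}{2}$ so that the error $h-f$, which could be as large as $+\frac{\epsilon}{2}$, is absorbed and the net value stays below $f$. Everything else, namely dimension preservation and the norm bound, is immediate from the two cited results.
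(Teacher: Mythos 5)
Your proposal is correct and coincides with the paper's own argument: both take $h \in \mathcal{S}_{\beta}$ with $\|f-h\|_{\infty} < \epsilon/2$ from Theorem \ref{densethm}, set $g = h - \epsilon/2$, and verify the one-sided bound and the error estimate in the same way. If anything, you are slightly more complete than the paper, since you explicitly invoke Lemma \ref{lipdim} to justify $g \in \mathcal{S}_{\beta}$, a step the paper leaves implicit here.
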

    \begin{proof}
    Since $f \in \mathcal{C}(\rectangle)$ and $\epsilon >0$,
    Theorem \ref{densethm} generates a member $h \in \mathcal{S}_{\beta}$ such that $$\|f-h\|_{\infty} < \frac{\epsilon}{2}.$$ 
    Choose $g(\boldsymbol{x}) :=h(\boldsymbol{x})- \frac{\epsilon}{2}, ~~~\forall~~\boldsymbol{x} \in \rectangle.$ Then, $$g(\boldsymbol{x})=h(\boldsymbol{x})-f(\boldsymbol{x})+f(\boldsymbol{x})-\frac{\epsilon}{2} \le \|f-h\|_{\infty} +f(\boldsymbol{x})- \frac{\epsilon}{2} < f(\boldsymbol{x}).$$ Furthermore, $$\|f- g\|_{\infty} \le \|f- h\|_{\infty} +\|h- g\|_{\infty} < \epsilon,$$
    establishing the proof.
    \end{proof}  
    Now, we aim to show the existence of best one-sided approximation. Let $\beta \in  [2, 3],$ and define
    $$\mathcal{C}_{\beta}(\rectangle) := \{ f \in \mathcal{C}(\rectangle) : \overline{\dim}_B (Gr(f)) \le \beta\}.$$ In view of \cite[Proposition $3.4$]{Fraser}, recall that $\mathcal{C}_{\beta}(\rectangle)$ is a normed linear space.
    Let $\{g_1,g_2,\dots,g_n\}$ be a linearly independent subset of $\mathcal{C}_{\beta}(\rectangle).$ Further, for a bounded below and Lebesgue integrable function $f: \rectangle \rightarrow \mathbb{R}$, we define  $$\mathcal{Y}_{n}^{\beta}(f):= \Big\{h \in span\{g_1,g_2, \dots, g_n\}: h (\boldsymbol{x}) \le f(\boldsymbol{x}) ~\forall ~\boldsymbol{x} \in \rectangle  \Big\}.$$ Theorem \ref{BSOSA} guarantees the nonemptyness of $\mathcal{Y}_{n}^{\beta}(f).$
     A function $h_f \in \mathcal{Y}_{n}^{\beta}(f)$ is said to be a best one-sided approximation from below to $f$ on $\rectangle$ if $$ \int_{\rectangle} h_f(\boldsymbol{x})~ d\boldsymbol{x}  = \sup \Big\{\int_{\rectangle } h(\boldsymbol{x})~ d\boldsymbol{x}: h \in \mathcal{Y}_{n}^{\beta}(f) \Big\}.$$
     In a similar way, we define best one-sided approximations from above. We state the next theorem for one-sided approximation from below. Though a similar result can be proved in terms of one-sided approximation from above, see, for instance, \cite{Devore,VV2}. 
     \begin{theorem}
     For a bounded below and integrable function $f: \rectangle \rightarrow \mathbb{R}$, there exists a member in $\mathcal{Y}_{n}(f)$ of best one-sided approximant from below to $f$ on $\rectangle$.
     \end{theorem}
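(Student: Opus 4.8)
The plan is to recognize this as a standard existence-of-best-approximation result inside the finite-dimensional subspace $V := \mathrm{span}\{g_1, g_2, \dots, g_n\} \subset \mathcal{C}(\rectangle)$, and to produce the optimal element as the limit of a maximizing sequence. First I would record two preliminary facts. Writing $L(h) := \int_{\rectangle} h(\boldsymbol{x})\, d\boldsymbol{x}$ for the linear functional being maximized, the constraint $h \le f$ together with the integrability of $f$ gives $L(h) \le \int_{\rectangle} f(\boldsymbol{x})\, d\boldsymbol{x} < \infty$ for every $h \in \mathcal{Y}_{n}^{\beta}(f)$, so that $M := \sup\{ L(h) : h \in \mathcal{Y}_{n}^{\beta}(f)\}$ is a finite real number; moreover, Theorem \ref{BSOSA} ensures $\mathcal{Y}_{n}^{\beta}(f) \neq \emptyset$. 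Hence a maximizing sequence $(h_m) \subset \mathcal{Y}_{n}^{\beta}(f)$ with $L(h_m) \to M$ exists, and the whole proof reduces to extracting from it a convergent subsequence whose limit lies in $\mathcal{Y}_{n}^{\beta}(f)$ and attains $M$.

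The hard part will be showing that such a maximizing sequence is bounded in $V$; once this is in hand, finite-dimensionality does the rest. I would argue by contradiction: if $(h_m)$ is unbounded, pass to a subsequence with $\rho_m := \|h_m\|_{\infty} \to \infty$ and set $\tilde{h}_m := h_m / \rho_m$, so that each $\tilde{h}_m$ lies on the unit sphere of the finite-dimensional space $V$. Since that sphere is compact, a further subsequence satisfies $\tilde{h}_{m_j} \to \tilde{h}$ uniformly, all norms on $V$ being equivalent, with $\|\tilde{h}\|_{\infty} = 1$. Dividing the constraint $h_{m_j}(\boldsymbol{x}) \le f(\boldsymbol{x})$ by $\rho_{m_j}$ and letting $j \to \infty$ yields $\tilde{h}(\boldsymbol{x}) \le 0$ at every point where $f$ is finite, hence, by continuity of $\tilde{h}$ and since an integrable $f$ is finite almost everywhere, at every $\boldsymbol{x} \in \rectangle$. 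On the other hand, $\int_{\rectangle} \tilde{h}_{m_j}(\boldsymbol{x})\, d\boldsymbol{x} = L(h_{m_j})/\rho_{m_j} \to 0$ because $L(h_{m_j}) \to M$ is finite while $\rho_{m_j} \to \infty$, and uniform convergence gives $\int_{\rectangle} \tilde{h}(\boldsymbol{x})\, d\boldsymbol{x} = 0$. A continuous function that is everywhere $\le 0$ and has vanishing integral must vanish identically, contradicting $\|\tilde{h}\|_{\infty} = 1$. Therefore $(h_m)$ is bounded.

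With boundedness established, the Bolzano--Weierstrass theorem in the finite-dimensional space $V$ furnishes a subsequence $h_{m_j} \to h_f \in V$, the convergence being uniform on $\rectangle$. Passing to the limit in $h_{m_j}(\boldsymbol{x}) \le f(\boldsymbol{x})$ shows $h_f(\boldsymbol{x}) \le f(\boldsymbol{x})$ for all $\boldsymbol{x} \in \rectangle$, so $h_f \in \mathcal{Y}_{n}^{\beta}(f)$. Finally, uniform convergence on the finite-measure domain $\rectangle$ gives $L(h_f) = \lim_{j} L(h_{m_j}) = M$, so $h_f$ realizes the supremum and is the desired best one-sided approximant from below. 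I expect the only genuinely delicate point to be the boundedness step; the stability of the constraint under uniform limits and the continuity of $L$ are routine, and no property of $\mathcal{C}_{\beta}(\rectangle)$ beyond its being a subspace of $\mathcal{C}(\rectangle)$ is actually needed in the argument.
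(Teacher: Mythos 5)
Your proof is correct, and its overall skeleton coincides with the paper's: take a maximizing sequence for the integral functional, show it is bounded, use compactness of bounded sets in the finite-dimensional span to extract a uniformly convergent subsequence, and pass to the limit in both the constraint $h\le f$ and the integral. The genuine difference lies in the boundedness step, which is exactly the point you correctly identify as the delicate one. The paper disposes of it with a single triangle-inequality display ending in ``$\le M_*$'' for ``an appropriate constant $M_*>0$,'' without explaining why $\int_{\rectangle}\bigl|h_m(\boldsymbol{x})-\tfrac{A}{(b-a)(d-c)}\bigr|\,d\boldsymbol{x}$ should be bounded; as written this is an assertion rather than a proof. Your normalization argument --- if $\|h_m\|_\infty\to\infty$, divide by the norms, extract a limit $\tilde h$ on the unit sphere, deduce $\tilde h\le 0$ from the constraint and $\int_{\rectangle}\tilde h=0$ from the finiteness of the supremum, and conclude $\tilde h\equiv 0$ by continuity, a contradiction --- supplies the missing justification and is the standard way to make this class of existence proofs airtight. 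One small caveat you share with the paper: the nonemptiness of $\mathcal{Y}_n^{\beta}(f)$ is obtained by citing Theorem \ref{BSOSA}, which is stated only for continuous $f$, whereas the present theorem assumes $f$ merely bounded below and integrable; strictly speaking nonemptiness for such $f$ requires an extra remark (e.g., that some element of the span lies below the lower bound of $f$), but this affects both arguments equally.
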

     \begin{proof}
     Let $(h_m)$ be a sequence in $\mathcal{Y}_{n}(f)$ such that
     \begin{equation}\label{osidedeqn}
      \int_{\rectangle}  h_m(\boldsymbol{x})~ d\boldsymbol{x} \to  A ~~~\text{as}~~m \to \infty,
     \end{equation}
     where $A = \sup \Big\{\int_{\rectangle } h(\boldsymbol{x})~ d\boldsymbol{x}: h \in \mathcal{Y}_{n}^{\beta}(f) \Big\}.$
      With an appropriate constant $M_*> 0,$ we have 
     \begin{equation*}
     \begin{aligned}
     \int_{\rectangle}  |h_m(\boldsymbol{x})|~ d\boldsymbol{x} \le&  \int_{\rectangle}  \Big|h_m(\boldsymbol{x})- \frac{A}{(b-a)(d-c)}\Big|~ d\boldsymbol{x}\\ & + \int_{\rectangle} \frac{A}{(b-a)(d-c)}~ d\boldsymbol{x} \le M_* ,
     \end{aligned}
     \end{equation*}
     where $I=[a,b]$ and $J=[c,d].$ 
       Since $\mathcal{Y}_{n}^{\beta}(f)$ is a subset of finite-dimensional linear space, the closed set of radius $M_*$ in $\mathcal{Y}_{n}^{\beta}(f)$ is compact. Therefore, there exist a subsequence $ (h_{m_k})$ and a function $h$ in $\mathcal{Y}_{n}^{\beta}(f)$ such that the sequence $(h_{m_k})$ converges to $h$ in $\mathcal{L}^1(\rectangle).$ Recall a basic functional analysis result that every norm is equivalent on a finite-dimensional linear space. Now, from the finite-dimensionality of  $\mathcal{Y}_{n}^{\beta}(f)$, it follows that the sequence $(h_{m_k})$ also converges to $h$ uniformly.
     Further, since $ h_m(\boldsymbol{x}) \le f(\boldsymbol{x}), ~\forall ~\boldsymbol{x} \in \rectangle,$ and $h_{m_k} \to h$ uniformly, we get $h(\boldsymbol{x}) \le f(\boldsymbol{x}), ~~ \forall~ \boldsymbol{x} \in \rectangle.$ Thus, $h \in \mathcal{Y}_{n}^{\beta}(f).$ Now, by (\ref{osidedeqn}), we have $$\int_{\rectangle}h(\boldsymbol{x})~ d\boldsymbol{x}=\lim_{k \to \infty} \int_{\rectangle}  h_{m_k}(\boldsymbol{x})~ d\boldsymbol{x} = A,$$
     completing the task.
     \end{proof}
    
           %\begin{proof}
        %Let $f \in \mathcal{C}(\rectangle)$ and $\epsilon >0$ be given.
        %Theorem \ref{densethm} provides a bivariate function $h \in \mathcal{D}_{\beta}$ such that  $$\|f-h\|_{\infty} < \frac{\epsilon}{2}.$$ 
        %Choose $g(\boldsymbol{x}) =h(\boldsymbol{x})- \frac{\epsilon}{2}, ~~~\forall~~(\boldsymbol{x}) \in \rectangle.$ Then, $$g(\boldsymbol{x})- g(t,y)=h(\boldsymbol{x})-f(\boldsymbol{x})+f(\boldsymbol{x})- h(t,y)-f(t,y)+f(t,y)-\frac{\epsilon}{2} \le \|f-h\|_{\infty} +f(\boldsymbol{x})- \frac{\epsilon}{2} < f(\boldsymbol{x}) .$$  Now, we have $g(\boldsymbol{x}) \le f(\boldsymbol{x})$ for every $(\boldsymbol{x}) \in \rectangle.$ Moreover,
        %Furthermore, $$\|f- g\|_{\infty} \le \|f- h\|_{\infty} +\|h- g\|_{\infty} < \epsilon.$$
        %Hence, we obtain a function $g \in \mathcal{D}_{\beta}$ satisfying the required conditions.
        %\end{proof}

\subsection{Construction of dimension preserving approximants}
  First, Hutchinson \cite{H} hinted at the generation of parameterized fractal curves. In \cite{MF1}, Barnsley introduced Fractal Interpolation Functions (FIFs) via Iterated Function System (IFSs). It is important to choose IFS appropriately that it is fitted as an attractor for a graph of a continuous function called FIF. We refer to the reader \cite{MF1} for more study regarding the construction of FIFs.

Computation of dimensions of fractal functions has been an integral part of fractal geometry. In \cite{MF1}, Barnsley proved estimates for the Hausdorff dimension of an affine FIF. Falconer also established a similar results in \cite{Falc2}. Barnsley and his collaborators \cite{MF4,MF6,Hardin} computed the box dimension of classes of affine FIFs. In \cite{MF4}, FIFs generated by bilinear maps have been studied. In \cite{HM}, a formula for the box dimension of FIFs $\R^n\to\R^m$ was proved. A particular case of FIFs given by Navascu\'es \cite{M2}, namely, (univariate) \emph{$\alpha$-fractal function} has been proven very useful in approximation theory and operator theory. Using series expansion, the box dimension of (univariate) $\alpha$-fractal function is estimated in \cite{VV3}.
\par
Let us recall a construction of bivariate $\alpha$-fractal function introduced in \cite{VV1}, which was influenced by Ruan and Xu \cite{Ruan}, on rectangular grids.\\
Let $x_0=a,~x_N=b,~y_0=c,~y_M=d,$ and $f \in \mathcal{C}(\rectangle).$ Let us denote $\Sigma_k=\{1,2,\dots,k\},$ $ \Sigma_{k,0}=\{0,1,\dots k \},$ $\partial \Sigma_{k,0}=\{0,k\} $ and int$\Sigma_{k,0}=\{1,2,\dots,k-1\}.$ Further, a net $\Delta$ on $\rectangle$ is defined as follows:
$$ \Delta:=\{(x_i,y_j):i \in \Sigma_{N,0},~j \in \Sigma_{M,0}~ \text{and}~ x_0<x_1<\dots<x_N; ~y_0<y_1<\dots<y_M\}.$$
For each $i \in \Sigma_N$ and $j \in \Sigma_M$, let us define $I_i=[x_{i-1},x_i],~J_j=[y_{j-1},y_j]$ and $\rectangle_{ij}:=I_i \times J_j.$ Let $i \in \Sigma_N,$ we define contraction mappings $u_i:I \rightarrow I_i$ such that
$$ u_i(x_0)=x_{i-1}, ~~ u_i(x_N)=x_i, ~~\text{if $i$ is odd}, ~~\text{and}
        ~ u_i(x_0)=x_i,~~ u_i(x_N)=x_{i-1},~~ \text{if $i$ is even.}$$
Similar to the above, for each $j \in \Sigma_M,$ we define $v_j:J \rightarrow J_j,$ and $Q_{ij}(\boldsymbol{x}):= (u_i^{-1}(x),v_j^{-1}(y)),$ where $\boldsymbol{x}=(x,y) \in \rectangle_{ij}.$

Let $\alpha \in  \mathcal{C}(\rectangle)$ be such that $\|\alpha\|_{\infty}<1.$ Assume further that $s \in  \mathcal{C}(\rectangle)$ satisfying $s(x_i,y_j)=f(x_i,y_j),$ for all $i \in \partial \Sigma_{N,0}, j \in \partial \Sigma_{M,0}.$ By \cite[Theorem $3.4$]{VV2}, we have a unique function $f^{\alpha}_{\Delta,s} \in  \mathcal{C}(\rectangle)$ termed as $\alpha$-fractal function, such that 
 \begin{equation*}
   f^{\alpha}_{\Delta,s}(\boldsymbol{x})= f(\boldsymbol{x})+\alpha(\boldsymbol{x})~ f^{\alpha}_{\Delta,s}\big(Q_{ij}(\boldsymbol{x})\big)- \alpha(\boldsymbol{x})~s\big(Q_{ij}(\boldsymbol{x})\big),
\end{equation*}
for $\boldsymbol{x} \in \rectangle_{ij},~ (i,j) \in \Sigma_N \times \Sigma_M.$

\begin{note}
In this note, we recall Theorem $5.16$ in \cite{VV2}. 
With the metric $$d_{\rectangle}(\boldsymbol{x},\boldsymbol{y}):=\sqrt{(x_1-y_1)^2+(x_2-y_2)^2},~~\text{where}~\boldsymbol{x}=(x_1,x_2),~ \boldsymbol{y}=(y_1,y_2),$$ we consider $f$ and $s$ such that
\begin{equation}\label{Hypo}
\begin{aligned}
& |f(\boldsymbol{x}) -f(\boldsymbol{y})| \le K_f d_{\rectangle}(\boldsymbol{x},\boldsymbol{y})^{\sigma},\\&
|s(\boldsymbol{x}) -s(\boldsymbol{x})| \le K_s d_{\rectangle}(\boldsymbol{x},\boldsymbol{y})^{\sigma}.
\end{aligned}
\end{equation}
for every $\boldsymbol{x},\boldsymbol{y} \in \rectangle,$ and for fixed $K_f, K_s > 0.$ Assume that for some $k_f>0, \delta_0 >0$ the following holds: for each $\boldsymbol{x} \in  \rectangle $ and $ 0< \delta <\delta_0$ there exists $\boldsymbol{y}$ such that $d_{\rectangle}(\boldsymbol{x},\boldsymbol{y}) \le \delta$ and
\begin{equation} \label{HCeq2}
|f(\boldsymbol{x})-f(\boldsymbol{y})| \ge k_fd_{\rectangle}(\boldsymbol{x},\boldsymbol{y})^{\sigma}.
\end{equation} 
Furthermore, we suppose $N=M,~x_i-x_{i-1} = \frac{1}{N},~y_j-y_{j-1} =\frac{1}{M}, \forall ~ i \in \Sigma_N, ~j \in  \Sigma_M$ and constant scaling function $\alpha.$\\
If $ |\alpha|< \min\Big\{\frac{1}{M},\frac{k_f}{(K_{f^\alpha}+K_s)M^{\sigma}}\Big\},$ then $\dim_B\big(Gr(f^{\alpha})\big) = 3 - \sigma.$
\end{note}
\begin{remark}
With the assumptions in the above note, one may construct dimension preserving approximants for a given function, see, for instance, \cite[Theorem $3.16$]{VM}. 
\end{remark}
Navascu\'es \cite{M1} developed the notion of (univariate) $\alpha$-fractal function via so-called (univariate) fractal operator. In \cite{VV1,VV2}, her collaborators extended some of her results in bivariate setting. On putting $L= B_{m,n}$ in \cite[Theorem $3.1$]{VV1}, we have a unique function $f^{\alpha}_{\Delta,B_{m,n}} \in  \mathcal{C}(\rectangle)$ such that 
 \begin{equation}\label{Fnleq1}
   f^{\alpha}_{\Delta,B_{m,n}}(\boldsymbol{x})= f(\boldsymbol{x})+\alpha(\boldsymbol{x})~ f^{\alpha}_{\Delta,B_{m,n}}\big(Q_{ij}(\boldsymbol{x})\big)- \alpha(\boldsymbol{x})~B_{m,n}(f)\big(Q_{ij}(\boldsymbol{x})\big),
\end{equation}
for $\boldsymbol{x} \in \rectangle_{ij},~ (i,j) \in \Sigma_N \times \Sigma_M.$

Following the work of \cite{VV1}, we define a single-valued fractal operator 
$\mathcal{F}^\alpha_{m,n}: \mathcal{C}(\rectangle) \to \mathcal{C}(\rectangle)$ by $$\mathcal{F}_{m,n}^\alpha(f) =f^{\alpha}_{\Delta,B_{m,n}}.$$
In \cite{VV1}, several operator theoretic results for fractal operator are obtained. We recall that $\mathcal{F}^\alpha_{m,n}$ is a bounded linear operator, see, for instance, \cite[Theorem $3.2$]{VV1}.
\begin{lemma}[\cite{CC}, Lemma $1$]
	Let $(X,\|.\|)$ be a Banach space, $T: X \to X$ be a linear operator. Suppose there exist constants $\lambda_1, \lambda_2 \in [0,1)$ such that
	$$ \|Tx-x\| \le \lambda_1 \|x\| + \lambda_2 \|Tx\|, \quad \forall~~ x \in X.$$
	Then $T$ is a topological isomorphism, and
	$$\frac{1-\lambda_2}{1+\lambda_1} \|x\| \le  \| T^{-1}x\| \le \frac{1+\lambda_2}{1-\lambda_1}  \|x\|,\quad \forall~~x \in X.$$
\end{lemma}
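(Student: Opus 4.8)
The plan is to extract from the single scalar inequality a two-sided norm bound of the form $c\|x\| \le \|Tx\| \le C\|x\|$, which at once yields boundedness, injectivity, and closedness of the range of $T$; the only genuine work is then to upgrade this to surjectivity, after which the bounded inverse theorem shows $T$ is a topological isomorphism and the estimates on $T^{-1}$ fall out by substituting $x = T^{-1}y$.

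First I would establish the two-sided estimate. Starting from the hypothesis $\|Tx - x\| \le \lambda_1\|x\| + \lambda_2\|Tx\|$ and using the triangle inequality in the form $\|Tx\| - \|x\| \le \|Tx - x\|$, I get $(1 - \lambda_2)\|Tx\| \le (1 + \lambda_1)\|x\|$, hence $\|Tx\| \le \frac{1+\lambda_1}{1-\lambda_2}\|x\|$; using instead $\|x\| - \|Tx\| \le \|Tx - x\|$ gives $(1 - \lambda_1)\|x\| \le (1 + \lambda_2)\|Tx\|$, hence $\|Tx\| \ge \frac{1-\lambda_1}{1+\lambda_2}\|x\|$. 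Since $\lambda_1, \lambda_2 \in [0,1)$, both constants are strictly positive, so $T$ is bounded, injective, and bounded below; the lower bound also forces the range $T(X)$ to be closed.

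The main obstacle is surjectivity, and here I would run a continuation argument along the segment $T_t := (1-t)I + tT$, $t \in [0,1]$, joining $T_0 = I$ to $T_1 = T$. Writing $Tx = \frac{1}{t}T_t x - \frac{1-t}{t}x$ for $t \in (0,1]$ and substituting into the hypothesis, one checks that each $T_t$ obeys an inequality of the same shape, namely $\|T_t x - x\| \le \mu(t)\|x\| + \lambda_2\|T_t x\|$ with $\mu(t) = t\lambda_1 + (1-t)\lambda_2$ a convex combination of $\lambda_1$ and $\lambda_2$, so that $\mu(t) \le \max\{\lambda_1,\lambda_2\} < 1$. The estimate of the previous paragraph, applied to $T_t$, then gives a lower bound $\|T_t x\| \ge c\|x\|$ with $c = \frac{1 - \max\{\lambda_1,\lambda_2\}}{1+\lambda_2} > 0$ uniform in $t$. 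I would next show that $\Lambda := \{t : T_t \text{ is invertible}\}$ is nonempty (it contains $0$), open (the invertible operators form an open set and $t \mapsto T_t$ is norm-continuous), and closed: given $t_n \to t^*$ with $t_n \in \Lambda$ and any $y \in X$, the vectors $x_n = T_{t_n}^{-1}y$ satisfy $\|x_n\| \le \|y\|/c$, and $T_{t^*}x_n = y + (t^* - t_n)(T - I)x_n \to y$, so $y$ lies in the closed range of $T_{t^*}$, whence $T_{t^*}$ is onto and injective, hence invertible. Connectedness of $[0,1]$ then forces $\Lambda = [0,1]$, so $T = T_1$ is bijective.

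Finally, $T$ being a continuous linear bijection of the Banach space $X$ onto itself, the bounded inverse (open mapping) theorem makes $T^{-1}$ continuous, so $T$ is a topological isomorphism. Substituting $x = T^{-1}y$ into $\frac{1-\lambda_1}{1+\lambda_2}\|x\| \le \|Tx\| \le \frac{1+\lambda_1}{1-\lambda_2}\|x\|$ and rearranging gives $\frac{1-\lambda_2}{1+\lambda_1}\|y\| \le \|T^{-1}y\| \le \frac{1+\lambda_2}{1-\lambda_1}\|y\|$, the asserted bounds. I expect surjectivity to be the delicate point: the naive route of writing $T = I - S$ and summing a Neumann series fails here, because the hypothesis only yields $\|Sx\| \le \frac{\lambda_1+\lambda_2}{1-\lambda_2}\|x\|$ and this constant need not lie below $1$, which is precisely why the homotopy argument is required.
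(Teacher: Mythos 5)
Your proof is correct, but note that the paper does not actually prove this lemma: it is quoted verbatim from Casazza--Christensen \cite{CC} and used as a black box, so there is no in-paper argument to compare against. Your write-up supplies a complete, self-contained proof. The two-sided estimate $\frac{1-\lambda_1}{1+\lambda_2}\|x\|\le\|Tx\|\le\frac{1+\lambda_1}{1-\lambda_2}\|x\|$ is derived exactly as one would expect, and you correctly identify surjectivity as the only nontrivial point: the naive bound $\|(I-T)x\|\le\frac{\lambda_1+\lambda_2}{1-\lambda_2}\|x\|$ need not be a contraction, so a Neumann series does not close the argument. Your method of continuity handles this cleanly; the key computations check out, namely that $T_t=(1-t)I+tT$ satisfies the hypothesis with constants $\mu(t)=t\lambda_1+(1-t)\lambda_2$ and $\lambda_2$, giving the uniform lower bound $\|T_tx\|\ge\frac{1-\max\{\lambda_1,\lambda_2\}}{1+\lambda_2}\|x\|$, which is what makes the set of parameters $t$ with $T_t$ invertible closed (nonemptiness and openness being standard). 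The final substitution $x=T^{-1}y$ recovers precisely the stated bounds on $T^{-1}$. The only cosmetic remark is that the uniform lower bound for $T_t$ is derived via division by $t$ and so formally covers only $t\in(0,1]$; you should add the one-line observation that $t=0$ is trivial since $T_0=I$ and the constant is at most $1$.
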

\begin{note}\label{use7}
We have the following.
\begin{equation*}
\begin{split}
B_{m,n}(f)(\boldsymbol{x})= &\frac{1}{(b-a)^m(d-c)^n}\sum_{i=0}^m \sum_{j=0}^n {m \choose i} {n \choose j}(x-a)^i (b-x)^{m-i}\\& (y-c)^j (d-y)^{n-j}f\Big(a+\frac{i(b-a)}{m}, c+\frac{j(d-c)}{n}\Big),
\end{split}
\end{equation*}
Choosing $f=1,$ we have
\begin{equation*}
\begin{split}
 B_{m,n}1(\boldsymbol{x}) & = \frac{1}{(b-a)^m(d-c)^n}\sum_{i=0}^m \sum_{j=0}^n {m \choose i} {n \choose j}(x-a)^i (b-x)^{m-i}(y-c)^j (d-y)^{n-j}\\ &=\frac{1}{(b-a)^m(d-c)^n}\sum_{i=0}^m {m \choose i}(x-a)^i (b-x)^{m-i}\sum_{j=0}^n {n \choose j}(y-c)^j (d-y)^{n-j}\\ & =\frac{1}{(b-a)^m(d-c)^n}\sum_{i=0}^m {m \choose i}(x-a)^i (b-x)^{m-i}(y-c +d-y)^{n}\\ & =\frac{1}{(b-a)^m(d-c)^n}(x-a+b-x)^m (y-c +d-y)^{n} \\ & =1.
 \end{split}
 \end{equation*}
 This implies that $\|B_{m,n}\| \ge 1.$
 Now, for every $f \in \mathcal{C}(\rectangle)$ we get 
 \begin{equation*}
 \begin{split}
 |B_{m,n}(f)(\boldsymbol{x})| & \le  \frac{\|f\|_{\infty}}{(b-a)^m(d-c)^n}\sum_{i=0}^m \sum_{j=0}^n {m \choose i} {n \choose j}(x-a)^i (b-x)^{m-i}(y-c)^j (d-y)^{n-j}\\ & = \|f\|_{\infty},
 \end{split}
 \end{equation*}
 which produces $\|B_{m,n}\| \le 1.$ Therefore, we have $\|B_{m,n}\|=1.$
\end{note}
\begin{theorem}\label{thmtopiso}
	 The fractal operator $\mathcal{F}^\alpha_{m,n}:
	\mathcal{C}(\rectangle) \to \mathcal{C}(\rectangle)$ is a topological isomorphism.
\end{theorem}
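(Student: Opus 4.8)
The plan is to verify the hypothesis of the Lemma of \cite{CC} (stated just above) for the operator $T = \mathcal{F}^\alpha_{m,n}$ acting on the Banach space $(\mathcal{C}(\rectangle), \|\cdot\|_\infty)$. Since $\mathcal{F}^\alpha_{m,n}$ is already known to be a bounded linear operator, it suffices to produce constants $\lambda_1, \lambda_2 \in [0,1)$ with
$$\|\mathcal{F}^\alpha_{m,n}(f) - f\|_\infty \le \lambda_1 \|f\|_\infty + \lambda_2 \|\mathcal{F}^\alpha_{m,n}(f)\|_\infty, \quad \forall f \in \mathcal{C}(\rectangle),$$
after which the Lemma delivers the topological isomorphism together with explicit bounds on the inverse.

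First I would write $g := \mathcal{F}^\alpha_{m,n}(f) = f^\alpha_{\Delta, B_{m,n}}$ and rearrange the defining functional equation (\ref{Fnleq1}) so as to isolate the difference: for $\boldsymbol{x} \in \rectangle_{ij}$,
$$g(\boldsymbol{x}) - f(\boldsymbol{x}) = \alpha(\boldsymbol{x})\, g(Q_{ij}(\boldsymbol{x})) - \alpha(\boldsymbol{x})\, B_{m,n}(f)(Q_{ij}(\boldsymbol{x})).$$
Taking absolute values, and using that each $Q_{ij}$ maps $\rectangle_{ij}$ into $\rectangle$ together with $\|\alpha\|_\infty < 1$, yields the pointwise bound
$$|g(\boldsymbol{x}) - f(\boldsymbol{x})| \le \|\alpha\|_\infty \|g\|_\infty + \|\alpha\|_\infty \|B_{m,n}(f)\|_\infty.$$

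The next step is where Note \ref{use7} enters: since $\|B_{m,n}\| = 1$, we have $\|B_{m,n}(f)\|_\infty \le \|f\|_\infty$, so taking the supremum over $\boldsymbol{x} \in \rectangle$ gives
$$\|\mathcal{F}^\alpha_{m,n}(f) - f\|_\infty \le \|\alpha\|_\infty \|f\|_\infty + \|\alpha\|_\infty \|\mathcal{F}^\alpha_{m,n}(f)\|_\infty.$$
Thus the Lemma applies with $\lambda_1 = \lambda_2 = \|\alpha\|_\infty$, both of which lie in $[0,1)$ by hypothesis, and we conclude that $\mathcal{F}^\alpha_{m,n}$ is a topological isomorphism.

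I do not expect any serious obstacle here; the whole argument rests on recognizing that the functional equation (\ref{Fnleq1}) directly produces an inequality of exactly the shape the \cite{CC} Lemma requires, with the Bernstein term tamed by the normalization $\|B_{m,n}\| = 1$. The only point demanding a little care is the bookkeeping across the subrectangles $\rectangle_{ij}$: one must observe that the pointwise estimate is uniform in $(i,j)$, because $\|\alpha\|_\infty$, $\|g\|_\infty$ and $\|B_{m,n}(f)\|_\infty$ are global quantities independent of the piece, so the bound passes to the supremum over all of $\rectangle$ without any loss.
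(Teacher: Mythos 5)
Your proposal is correct and follows essentially the same route as the paper: both rearrange the self-referential equation (\ref{Fnleq1}) to get $\|f-\mathcal{F}^\alpha_{m,n}(f)\|_\infty \le \|\alpha\|_\infty\|\mathcal{F}^\alpha_{m,n}(f)\|_\infty + \|\alpha\|_\infty\|f\|_\infty$ using $\|B_{m,n}\|=1$ from Note \ref{use7}, and then invoke the Cazassa--Christensen lemma with $\lambda_1=\lambda_2=\|\alpha\|_\infty<1$. If anything, your write-up is slightly more careful than the paper's, which writes an equality where a triangle-inequality estimate is meant.
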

\begin{proof}
	Using equation (\ref{Fnleq1}) and note \ref{use7}, one gets
	\begin{equation*}
	\begin{split}
\big	\| f - \mathcal{F}^\alpha_{m,n}(f)\big\|_\infty \le~ \|\alpha\|_\infty  \big\|\mathcal{F}_{m,n}^\alpha(f) -  B_{m,n}f  \big\|_\infty = ~\|\alpha\|_\infty \big\|\mathcal{F}_{m,n}^\alpha(f)\big\|_\infty+ \|\alpha\|_\infty \|f\|_\infty .
	\end{split}
	\end{equation*}
Since $\|\alpha\|_\infty < 1$, the previous lemma yields that the fractal operator $\mathcal{F}_{m,n}^\alpha$ is a topological isomorphism.
\end{proof}
\begin{remark}
The above theorem may strengthen item-4 of \cite[Theorem $3.2$]{VV1}. To be precise, item-4 tells that $\mathcal{F}_{m,n}^\alpha$ is a topological isomorphism if $ \|\alpha\|_\infty < \big(1+\|I - B_{m,n}\|\big)^{-1},$ which is more restricted than the standing assumption considered in the above theorem, that is, $\|\alpha\|_\infty < 1.$
\end{remark}
%\begin{example}
 %	Consider the following function in the domain $[-1,1] \times [-1,1]$ as the germ function.
 %	$$ f(\boldsymbol{x}) = 100(y-x^2)^2 +(1-x)^2,$$ 
 	%which is depicted in Fig.\ref{figonesided2}. Recall that the bivariate function $f$ defined above is the Rosenbrock's valley function in $\mathbb{R}^2,$ one of the benchmark functions in the field of global optimization. Consider the following choice of the parameters:
 	
 	%\begin{enumerate}[(i)]
 		%\item A net $\Delta$ determined by the partition $\{-1,-0.5,0,0.5,1\}$ of $[-1,1].$
 		
 		%\item Constant scaling function $\alpha(\boldsymbol{x}) = 0.9$ for all $(\boldsymbol{x}) \in [-1,1] \times [-1,1]$.

 		%\item Base function $s(\boldsymbol{x}) = x^2 y^{10} f(\boldsymbol{x}).$	
 	%\end{enumerate}

 %As in the case of two-dimensional Rosenbrock's function, it is felt that the ``fractalized Rosenbrock's functions" can find rich applications in optimization, although we do not claim a direct application herein. Further, the degrees of freedom offered by this fractalization procedure may be useful to deal with some problems combined with approximation and optimization.\end{example}
 
\begin{theorem}
Let $f \in \mathcal{C}(\rectangle)$ be such that $f(\boldsymbol{x}) \geq 0,~\forall ~\boldsymbol{x} \in \rectangle.$ Then for $\epsilon >0,$ and for $\alpha \in \mathcal{C}(\rectangle)$ satisfying $\|\alpha \|_{\infty} < 1,$ we have an $\alpha$-fractal function $ g_{\Delta,B_{m,n}}^{\alpha}$ satisfying $$ g_{\Delta,B_{m,n}}^{\alpha}(\boldsymbol{x}) \geq 0, ~~\forall ~\boldsymbol{x} \in \rectangle~~\text{and}~ \|f- g_{\Delta,B_{m,n}}^{\alpha}\|_{\infty} <\epsilon.$$
\end{theorem}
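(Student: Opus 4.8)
The plan is to exploit two features of the construction: the fractal operator $\mathcal{F}^\alpha_{m,n}$ reproduces constants, and the size of the fractal perturbation $\|f^\alpha_{\Delta,B_{m,n}}-f\|_\infty$ can be driven below any prescribed threshold by increasing the Bernstein degrees $m,n$ (the scaling function $\alpha$ being fixed). Granting these, I would produce the desired nonnegative approximant by $\alpha$-fractalizing the upward-shifted germ $f+\tfrac{\epsilon}{2}$, so that the shift absorbs the fractal perturbation and keeps the function positive.

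First I would derive a quantitative control on the fractal perturbation directly from the defining equation (\ref{Fnleq1}). Subtracting $f(\boldsymbol{x})$ from both sides gives $f^\alpha_{\Delta,B_{m,n}}(\boldsymbol{x})-f(\boldsymbol{x})=\alpha(\boldsymbol{x})\big(f^\alpha_{\Delta,B_{m,n}}(Q_{ij}(\boldsymbol{x}))-B_{m,n}(f)(Q_{ij}(\boldsymbol{x}))\big)$, and since each $Q_{ij}$ maps $\rectangle_{ij}$ onto $\rectangle$, passing to the supremum norm and inserting $\pm f$ yields $\|f^\alpha_{\Delta,B_{m,n}}-f\|_\infty \le \|\alpha\|_\infty\big(\|f^\alpha_{\Delta,B_{m,n}}-f\|_\infty+\|f-B_{m,n}(f)\|_\infty\big)$. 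As $\|\alpha\|_\infty<1$, this rearranges to $\|f^\alpha_{\Delta,B_{m,n}}-f\|_\infty \le \frac{\|\alpha\|_\infty}{1-\|\alpha\|_\infty}\,\|f-B_{m,n}(f)\|_\infty$. Because Bernstein polynomials converge uniformly, $\|f-B_{m,n}(f)\|_\infty\to 0$, so I may fix $m,n$ large enough that $\|f^\alpha_{\Delta,B_{m,n}}-f\|_\infty<\tfrac{\epsilon}{2}$.

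Next I would record that $\mathcal{F}^\alpha_{m,n}$ fixes constants. Since $B_{m,n}$ reproduces constants (Note \ref{use7}), the constant function $\tfrac{\epsilon}{2}$ satisfies the functional equation (\ref{Fnleq1}) with itself in the role of the fractal function, so by the uniqueness behind (\ref{Fnleq1}) its $\alpha$-fractalization is again $\tfrac{\epsilon}{2}$. Invoking linearity of $\mathcal{F}^\alpha_{m,n}$ (as in Theorem \ref{thmtopiso} and \cite{VV1}), the $\alpha$-fractal function $g^\alpha_{\Delta,B_{m,n}}$ associated with the germ $g:=f+\tfrac{\epsilon}{2}$ therefore equals $f^\alpha_{\Delta,B_{m,n}}+\tfrac{\epsilon}{2}$; the admissibility of this germ is automatic, as its Bernstein base $B_{m,n}(g)=B_{m,n}(f)+\tfrac{\epsilon}{2}$ still interpolates $g$ at the corners of $\rectangle$.

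Finally I would check the two claimed properties. For nonnegativity, $g^\alpha_{\Delta,B_{m,n}}(\boldsymbol{x}) = f^\alpha_{\Delta,B_{m,n}}(\boldsymbol{x})+\tfrac{\epsilon}{2} \ge f(\boldsymbol{x})-\|f^\alpha_{\Delta,B_{m,n}}-f\|_\infty+\tfrac{\epsilon}{2} > f(\boldsymbol{x}) \ge 0$; for the approximation bound, $\|f-g^\alpha_{\Delta,B_{m,n}}\|_\infty \le \|f-f^\alpha_{\Delta,B_{m,n}}\|_\infty+\tfrac{\epsilon}{2} < \epsilon$, which finishes the proof. I expect the only genuinely nonroutine point to be the first step, namely identifying the right quantitative estimate and recognizing that, with $\alpha$ fixed, the required smallness of the perturbation must come from choosing the degrees $m,n$ large; the constant-reproduction and linearity that make the upward shift work are immediate once (\ref{Fnleq1}) and Note \ref{use7} are available.
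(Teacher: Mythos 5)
Your proposal is correct, and its core mechanism is the same as the paper's: establish that the fractal operator fixes constants (via $B_{m,n}(1)=1$ and the uniqueness in (\ref{Fnleq1})), then shift a suitable fractal approximant of $f$ upward by $\tfrac{\epsilon}{2}$, using linearity of $\mathcal{F}^\alpha_{m,n}$ to recognize the shifted function as the fractalization of the shifted germ, and finish with the same two triangle-inequality estimates for nonnegativity and closeness. The one place you genuinely diverge is in how the $\tfrac{\epsilon}{2}$-close fractal function is obtained. The paper invokes its density theorem (Theorem \ref{densethm}) to produce some $h^\alpha_{\Delta,B_{m,n}}$ within $\tfrac{\epsilon}{2}$ of $f$ --- a citation that is at best loose, since that theorem concerns density of $\mathcal{S}_\beta$ rather than of the range of the fractal operator, and the paper does not say how $h$ or $m,n$ are chosen. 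You instead fractalize $f$ itself and control the perturbation explicitly via $\|f^\alpha_{\Delta,B_{m,n}}-f\|_\infty \le \tfrac{\|\alpha\|_\infty}{1-\|\alpha\|_\infty}\|f-B_{m,n}(f)\|_\infty$, making it small by taking $m,n$ large and appealing to uniform convergence of the tensor Bernstein polynomials. This buys a self-contained and quantitatively explicit argument that repairs the paper's weakest step; the only cost is that your choice of $m,n$ depends on $\epsilon$, which is consistent with the theorem as stated (the degrees are not fixed in advance). Your remark that the constant germ is admissible because $B_{m,n}$ interpolates at the corners of $\rectangle$ is a detail the paper omits but that is needed for the uniqueness argument to apply.
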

\begin{proof}
Note that the Bernstein operator $B_{m,n}$ fixes the constant function $1$, that is, $B_{m,n}(1)=1,$ where $ 1(\boldsymbol{x})=1 $ on $\rectangle.$  Consider $ \alpha \in \mathcal{C}(\rectangle)$ such that $\|\alpha\|_{\infty} < 1.$ From Equation \ref{Fnleq1}, we deduce
$$ \|g_{\Delta,B_{m,n}}^{\alpha}- g\|_{\infty} \leq \|\alpha \|_{\infty}\|g_{\Delta,B_{m,n}}^{\alpha}- B_{m,n}g\|_{\infty}, ~\forall~ g \in \mathcal{C}(\rectangle).$$ Choose $g=1$, then the above inequality gives $$ \|f_{\Delta,B_{m,n}}^{\alpha}- 1\|_{\infty} \leq \|\alpha \|_{\infty}\|f_{\Delta,B_{m,n}}^{\alpha}- 1\|_{\infty},$$ and this further yields $\|f_{\Delta,B_{m,n}}^{\alpha}- 1\|_{\infty} = 0.$ Therefore, $f_{\Delta,B_{m,n}}^\alpha=1$, that is, $\mathcal{F}_{m,n}^{\alpha}(1)=1.$\\
For $\epsilon >0 $, $\alpha \in \mathcal{C}(\rectangle)$ and $ f \in \mathcal{C}(\rectangle).$ Using Theorem \ref{densethm}, there exists a function $h_{\Delta,B_{m,n}}^\alpha$ such that $$ \|f- h_{\Delta,B_{m,n}}^{\alpha}\|_{\infty} <\frac{\epsilon}{2}, ~ \text{ where}~ \mathcal{F}_{m,n}^{\alpha}(h)=h_{\Delta,B_{m,n}}^{\alpha}.$$ Define
 $ g_{\Delta,B_{m,n}}^{\alpha}(\boldsymbol{x})=h_{\Delta,B_{m,n}}^{\alpha}(\boldsymbol{x})+ \frac{\epsilon}{2} $ for all $\boldsymbol{x} \in \rectangle.$ Since $\mathcal{F}_{m,n}^{\alpha}(1)=1,$   $$ g_{\Delta,B_{m,n}}^{\alpha}(\boldsymbol{x})=h_{\Delta,B_{m,n}}^{\alpha}(\boldsymbol{x})+ \frac{\epsilon}{2}1(\boldsymbol{x}) = h_{\Delta,B_{m,n}}^{\alpha}(\boldsymbol{x})+ \frac{\epsilon}{2}1^{\alpha}(\boldsymbol{x}).$$
 Further, since $\mathcal{F}_{m,n}^{\alpha}$ is a linear operator $$ g_{\Delta,B_{m,n}}^{\alpha} = h_{\Delta,B_{m,n}}^{\alpha}+ \frac{\epsilon}{2}1^{\alpha}= \mathcal{F}_{m,n}^{\alpha}(h+\frac{\epsilon}{2} 1).$$
 Moreover, 
 \begin{equation*}
 \begin{aligned}
 g_{\Delta,B_{m,n}}^{\alpha}(\boldsymbol{x}) & = h_{\Delta,B_{m,n}}^{\alpha}(\boldsymbol{x})+ \frac{\epsilon}{2}\\ & = h_{\Delta,B_{m,n}}^{\alpha}(\boldsymbol{x})+ \frac{\epsilon}{2}-f(\boldsymbol{x})+f(\boldsymbol{x}) \\ & \geq f(\boldsymbol{x})+ \frac{\epsilon}{2} - \| h_{\Delta,B_{m,n}}^{\alpha}- f \|_{\infty}\\&  \geq 0.
 \end{aligned}
 \end{equation*}
 Further, we get
 \begin{equation*}
  \begin{aligned}
 \| f-g_{\Delta,B_{m,n}}^{\alpha} \|_{\infty} & \leq \| f-h_{\Delta,B_{m,n}}^{\alpha} \|_{\infty}+\| h_{\Delta,B_{m,n}}^{\alpha}-g_{\Delta,B_{m,n}}^{\alpha} \|_{\infty}\\&< \frac{\epsilon}{2}+\frac{\epsilon}{2} \\&= \epsilon,
 \end{aligned}
  \end{equation*}
  completing the proof.
 \end{proof}

\section{Some multi-valued mappings}

First, we collect some definitions and related results which will be used in this section.
\begin{definition}(\cite{Aubin}).
  Let $(X,\|.\|_X)$ and $(Y,\|.\|_Y)$ be normed linear spaces. For a multi-valued (set-valued) mapping $T: X \rightrightarrows Y$, the
   domain of $T$ is defined by
  $\text{Dom}(T):= \{x \in X: T(x) \neq \emptyset\}.$ Then $T: X \rightrightarrows Y$ is
  \begin{enumerate}
  \item  \emph{convex}  if $$\lambda T(x_1)+(1-\lambda)T(x_2) \subseteq T\big(\lambda x_1+(1-\lambda)x_2\big), ~\forall ~x_1, x_2 \in \text{Dom}(T),~~\lambda \in [0,1].$$

  \item  \emph{process}  if $$\lambda T(x)=  T(\lambda x),~ \forall~x \in X,~\lambda > 0, ~\text{and}~ 0 \in T(0).$$

      \item \emph{linear} if $$\beta T(x_1)+ \gamma T(x_2) \subseteq T\big(\beta x_1+\gamma x_2\big), ~\forall ~ x_1, x_2 \in \text{Dom}(T),~\beta, \gamma \in \mathbb{R}.$$

          \item \emph{closed} if the graph of $T$ defined by $Gr(T):= \big\{(\boldsymbol{x})\in X \times Y: y \in T(x)  \big\}$ is closed.

          \item \emph{Lipschitz} if 
          $$T(x_1) \subseteq T(x_2) + l \|x_1-x_2\|_X ~U_Y,~\forall~x_1, x_2 \in \text{Dom}(T), ~\text{for some constant}~ l>0,$$
          where $U_Y=\{y\in Y: \|y\|_Y\le 1\}$.
         \item \emph{lower semicontinuous} at $x\in X$ if there exists a $\delta > 0$ such that $$U \cap T(x') \neq \emptyset ~\text{ whenever}~ \|x-x'\|_X < \delta$$ holds for a given open set $U$ in $Y$ satisfying $ U \cap T(x) \neq \emptyset .$
  \end{enumerate}
  \end{definition}
  Note that the above definitions are also applicable in metric spaces with obvious modifications, see, for instance, \cite{Aubin}.
  \begin{theorem}[\cite{DS}, Corollary $1.4$] \label{Multhm2}
  Let $T: \text{Dom}(T)=X \rightrightarrows Y$ be linear such that $T(0)= \{0\}.$ Then, $T$ is single-valued.
 \end{theorem}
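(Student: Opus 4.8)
The plan is to prove the statement

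\begin{quote}
\textbf{Theorem.} Let $T:\text{Dom}(T)=X \rightrightarrows Y$ be linear such that $T(0)=\{0\}$. Then $T$ is single-valued.
\end{quote}

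by exploiting the defining inclusion of a linear set-valued map to show that the only possible output value at any point is forced to be a single element. First I would recall that linearity of $T$ means
\[
\beta\,T(x_1)+\gamma\,T(x_2)\subseteq T(\beta x_1+\gamma x_2)\qquad\forall\,x_1,x_2\in X,\ \beta,\gamma\in\mathbb{R}.
\]
The key observation is that a linear set-valued map is in particular a process, so $T$ behaves well under scalar multiplication; combined with the additive inclusion this lets me relate the ``size'' of the image set $T(x)$ to the image of $0$. The main idea is that if $y_1,y_2\in T(x)$ are two distinct values in the same image, then by subtracting I can manufacture a nonzero element of $T(0)$, contradicting the hypothesis $T(0)=\{0\}$.

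Concretely, I would argue as follows. Fix $x\in X$ and suppose $y_1,y_2\in T(x)$. Taking $x_1=x_2=x$ and the scalars $\beta=1,\ \gamma=-1$ in the linearity inclusion gives
\[
T(x)-T(x)\subseteq T(x-x)=T(0)=\{0\}.
\]
Since $y_1\in T(x)$ and $y_2\in T(x)$, the difference $y_1-y_2$ lies in $T(x)-T(x)\subseteq\{0\}$, whence $y_1-y_2=0$, i.e.\ $y_1=y_2$. Because $x$ was arbitrary and $\text{Dom}(T)=X$ guarantees every image $T(x)$ is nonempty, this shows each $T(x)$ is a singleton, which is exactly the assertion that $T$ is single-valued. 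A small point to verify is that the set-difference $T(x)-T(x)$ is interpreted as the Minkowski difference $\{u-v:u,v\in T(x)\}$, so that $y_1-y_2$ is genuinely an element of it.

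The step I expect to require the most care is justifying the precise reading of the linearity condition — in particular ensuring that $\beta T(x_1)+\gamma T(x_2)$ with $\gamma=-1$ is the Minkowski sum of $T(x_1)$ and the scaled set $(-1)T(x_2)$, and that this scaled set really equals $\{-v:v\in T(x_2)\}$. This follows directly from the definition of scalar multiplication of sets, but it is the one place where a sloppy interpretation could derail the one-line contradiction. Once that is pinned down, the argument is immediate and no further machinery (such as the process or Lipschitz structure) is needed.
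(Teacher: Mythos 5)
Your argument is correct and complete. Note that the paper itself gives no proof of this statement --- it is quoted as Corollary~1.4 of the cited reference [DS] --- so there is no in-paper proof to compare against; your proposal supplies a valid self-contained justification. The core step is exactly right: with the paper's definition of linearity, taking $x_1=x_2=x$, $\beta=1$, $\gamma=-1$ gives $T(x)+(-1)T(x)\subseteq T(0)=\{0\}$, and since $\mathrm{Dom}(T)=X$ guarantees $T(x)\neq\emptyset$, any $y_1,y_2\in T(x)$ satisfy $y_1-y_2\in\{0\}$. Your care about reading $\beta T(x_1)+\gamma T(x_2)$ as a Minkowski sum of scaled sets is exactly the right point to pin down, and it works. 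One small quibble: the motivational remark that ``a linear set-valued map is in particular a process'' is not quite justified under the paper's definitions (linearity gives only an inclusion $\lambda T(x)\subseteq T(\lambda x)$, whereas a process requires equality), but since your actual argument never uses this claim, it does not affect the proof.
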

  \begin{theorem}[\cite{DS}, Corollary $2.1$]\label{Multhm2a}
  Let $T: \text{Dom}(T)=X \rightrightarrows Y$ be such that $T(x_0)$ is singleton for some $x_0\in X.$ Then the following are equivalent:
  \begin{itemize}
  \item $T$ is single-valued and affine.
  \item $T$ is convex.
  \end{itemize}
  
  \end{theorem}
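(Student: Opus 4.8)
The plan is to prove the two implications separately, treating the forward direction as routine and concentrating on the converse, where the singleton hypothesis on $T(x_0)$ does the real work. For the implication that a single-valued affine map is convex, I would write $T(x)=\{t(x)\}$ with $t:X\to Y$ affine, so that $t(\lambda x_1+(1-\lambda)x_2)=\lambda t(x_1)+(1-\lambda)t(x_2)$ for all $x_1,x_2\in X$ and $\lambda\in[0,1]$. Then
\[
\lambda T(x_1)+(1-\lambda)T(x_2)=\{\lambda t(x_1)+(1-\lambda)t(x_2)\}=\{t(\lambda x_1+(1-\lambda)x_2)\}=T(\lambda x_1+(1-\lambda)x_2),
\]
so the defining inclusion for convexity holds (in fact with equality). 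This direction uses neither $\text{Dom}(T)=X$ nor the singleton hypothesis.

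For the converse, assume $T$ is convex with $T(x_0)=\{y_0\}$ a singleton, and the first, key step is to show that $T$ is single-valued everywhere. Fix an arbitrary $x\in X$ and consider its reflection $x'=2x_0-x$, so that $x_0=\tfrac12 x+\tfrac12 x'$. Since $\text{Dom}(T)=X$, both $T(x)$ and $T(x')$ are nonempty, and convexity with $\lambda=\tfrac12$ yields $\tfrac12 T(x)+\tfrac12 T(x')\subseteq T(x_0)=\{y_0\}$. Now a Minkowski sum of two nonempty sets is contained in a singleton only if each summand is itself a singleton: if $a_1,a_2\in A$ and $b\in B$ with $A+B\subseteq\{c\}$, then $a_1+b=c=a_2+b$ forces $a_1=a_2$, and symmetrically for $B$. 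Applying this with $A=\tfrac12 T(x)$ and $B=\tfrac12 T(x')$ shows $T(x)$ is a singleton; as $x$ was arbitrary, $T$ is single-valued, and I write $T(x)=\{t(x)\}$.

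It then remains to upgrade single-valuedness to affineness. With $T$ single-valued, the convexity inclusion $\{\lambda t(x_1)+(1-\lambda)t(x_2)\}\subseteq\{t(\lambda x_1+(1-\lambda)x_2)\}$ is an inclusion of singletons and hence an equality, so $t(\lambda x_1+(1-\lambda)x_2)=\lambda t(x_1)+(1-\lambda)t(x_2)$ for all $x_1,x_2\in X$ and $\lambda\in[0,1]$. A standard argument then shows $t$ is affine: setting $L(x):=t(x)-t(0)$, the Jensen identity forces $L$ to be additive and homogeneous, whence $t=L+t(0)$ with $L$ linear. The main obstacle is precisely the single-valuedness step; the reflection trick together with the observation that $A+B\subseteq\{c\}$ collapses both summands to singletons is what promotes the \emph{local} singleton hypothesis at $x_0$ to a \emph{global} one, and it is essential here that $\text{Dom}(T)=X$, so that the reflected point $x'$ lies in the domain with $T(x')\neq\emptyset$.
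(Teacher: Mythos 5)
The paper does not actually prove this statement: it is quoted verbatim from Deutsch and Singer \cite{DS} as Corollary $2.1$ and used as a black box, so there is no in-paper argument to compare yours against. That said, your proposal is a correct and complete proof, and it follows the standard route for this result. The forward implication is the routine verification you give. For the converse, the reflection trick $x' = 2x_0 - x$ (legitimate here because $X$ is a linear space and $\mathrm{Dom}(T) = X$, so $T(x')\neq\emptyset$) combined with the observation that a Minkowski sum of two nonempty sets contained in a singleton forces both summands to be singletons is exactly the right mechanism for propagating the singleton hypothesis from $x_0$ to all of $X$. The final upgrade from the Jensen identity $t(\lambda x_1+(1-\lambda)x_2)=\lambda t(x_1)+(1-\lambda)t(x_2)$, $\lambda\in[0,1]$, to affineness of $t$ is also sound: taking $x_2=0$ gives positive homogeneity of $L=t-t(0)$ on $[0,1]$, the midpoint identity then gives additivity, and additivity plus $[0,1]$-homogeneity yields full real homogeneity via $L(-x)=-L(x)$. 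The only cosmetic caveat is that you should say explicitly that this argument needs the linear (not merely metric) structure of $X$ for the reflected point to exist; in the paper's setting $X$ and $Y$ are normed linear spaces, so this is satisfied.
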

  Our work in this part is partly motivated by \cite{VV3}.
  \begin{theorem}\label{Multhm3}
  The multi-valued mapping $ \mathcal{W}^\alpha_{\Delta}: \mathcal{C}(\rectangle) \rightrightarrows \mathcal{C}(\rectangle)$ defined by $$\mathcal{W}^\alpha_{\Delta}(f) =\{f^{\alpha}_{\Delta,B_{m,n}}: m,~n \in \mathbb{N} \}$$ is a Lipschitz process.
  \end{theorem}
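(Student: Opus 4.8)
The plan is to verify separately the two defining conditions of a \emph{process} and then the \emph{Lipschitz} condition, in each case reducing assertions about the set-valued map $\mathcal{W}^\alpha_{\Delta}$ to assertions about the individual single-valued fractal operators $\mathcal{F}^\alpha_{m,n}$, which are bounded linear operators on $\mathcal{C}(\rectangle)$ (as recalled before Theorem \ref{thmtopiso}).

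First, for the process conditions I would exploit the linearity of each $\mathcal{F}^\alpha_{m,n}$. Since $\mathcal{F}^\alpha_{m,n}(0)=0$ for every $m,n$, we get $\mathcal{W}^\alpha_{\Delta}(0)=\{0\}$, so in particular $0\in\mathcal{W}^\alpha_{\Delta}(0)$. For positive homogeneity, fix $\lambda>0$ and $f\in\mathcal{C}(\rectangle)$; using $\mathcal{F}^\alpha_{m,n}(\lambda f)=\lambda\,\mathcal{F}^\alpha_{m,n}(f)$ together with the definition of the scalar multiple of a set, one computes
$$\mathcal{W}^\alpha_{\Delta}(\lambda f)=\{\mathcal{F}^\alpha_{m,n}(\lambda f):m,n\in\N\}=\{\lambda\,\mathcal{F}^\alpha_{m,n}(f):m,n\in\N\}=\lambda\,\mathcal{W}^\alpha_{\Delta}(f),$$
which is exactly the process identity. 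These verifications are routine.

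The substance of the theorem lies in the Lipschitz condition, and here the key point is to secure a bound on the operator norms $\|\mathcal{F}^\alpha_{m,n}\|$ that is \emph{uniform} in $m,n$. This is where Note \ref{use7} is decisive: because $\|B_{m,n}\|=1$ for every $m,n$, the defining relation (\ref{Fnleq1}) gives, after taking supremum norms and using $\|B_{m,n}(h)\|_\infty\le\|h\|_\infty$,
$$\|\mathcal{F}^\alpha_{m,n}(h)\|_\infty\le\|h\|_\infty+\|\alpha\|_\infty\,\|\mathcal{F}^\alpha_{m,n}(h)\|_\infty+\|\alpha\|_\infty\,\|h\|_\infty,$$
whence, since $\|\alpha\|_\infty<1$,
$$\|\mathcal{F}^\alpha_{m,n}(h)\|_\infty\le\frac{1+\|\alpha\|_\infty}{1-\|\alpha\|_\infty}\,\|h\|_\infty=:l\,\|h\|_\infty,$$
with $l$ independent of $m,n$. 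I expect this uniformity to be the main obstacle conceptually: without the index-independent bound $\|B_{m,n}\|=1$ one could only extract a Lipschitz constant depending on the indices, which would not deliver a single admissible $l$.

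To finish, I would use index matching. Given any $f^{\alpha}_{\Delta,B_{m,n}}\in\mathcal{W}^\alpha_{\Delta}(f)$, linearity yields $f^{\alpha}_{\Delta,B_{m,n}}-g^{\alpha}_{\Delta,B_{m,n}}=\mathcal{F}^\alpha_{m,n}(f-g)$, and the uniform bound gives $\|f^{\alpha}_{\Delta,B_{m,n}}-g^{\alpha}_{\Delta,B_{m,n}}\|_\infty\le l\,\|f-g\|_\infty$; hence $f^{\alpha}_{\Delta,B_{m,n}}\in g^{\alpha}_{\Delta,B_{m,n}}+l\,\|f-g\|_\infty\,U$, where $U$ denotes the closed unit ball of $\mathcal{C}(\rectangle)$. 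Since $g^{\alpha}_{\Delta,B_{m,n}}\in\mathcal{W}^\alpha_{\Delta}(g)$ and $(m,n)$ was arbitrary, we conclude $\mathcal{W}^\alpha_{\Delta}(f)\subseteq\mathcal{W}^\alpha_{\Delta}(g)+l\,\|f-g\|_\infty\,U$, the required Lipschitz inclusion with the same constant $l$. Combining this with the process conditions establishes that $\mathcal{W}^\alpha_{\Delta}$ is a Lipschitz process.
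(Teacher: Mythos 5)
Your proposal is correct and follows essentially the same route as the paper: linearity of each $\mathcal{F}^\alpha_{m,n}$ for the process conditions, and the functional equation (\ref{Fnleq1}) together with $\|B_{m,n}\|=1$ from Note \ref{use7} to obtain the index-independent constant $\frac{1+\|\alpha\|_\infty}{1-\|\alpha\|_\infty}$, followed by index matching for the Lipschitz inclusion. The only cosmetic difference is that you first bound $\|\mathcal{F}^\alpha_{m,n}\|$ and then apply it to $f-g$ by linearity, whereas the paper estimates $|f^{\alpha}_{\Delta,B_{m,n}}(\boldsymbol{x})-g^{\alpha}_{\Delta,B_{m,n}}(\boldsymbol{x})|$ directly; the computations are identical.
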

  \begin{proof}
   Using the linearity of $\mathcal{F}_{m,n}^\alpha,$ we have

   \begin{equation*}
                   \begin{aligned}
            \mathcal{W}^\alpha_{\Delta} (\lambda f) = \{(\lambda f)^{\alpha}_{\Delta,B_{m,n}}: m,~n \in \mathbb{N} \}
                   =  \lambda \mathcal{W}^\alpha_{\Delta}(f),~ \forall~f \in \mathcal{C}(\rectangle),~\lambda > 0.
\end{aligned}
   \end{equation*}
Again by linearity of $\mathcal{F}_{m,n}^\alpha,$ it is plain that $\mathcal{W}^\alpha_{\Delta} (0) = \{0\}.$ Therefore,  $\mathcal{W}^\alpha_{\Delta}$ is a process. 

\par
Let $f,g \in \mathcal{C}(\rectangle).$
 On applying Equation \ref{Fnleq1}, we have
    
 \begin{equation*}
           \begin{aligned}
                    \big|f^{\alpha}_{\Delta,B_{m,n}}(\boldsymbol{x})- g^{\alpha}_{\Delta,B_{m,n}}(\boldsymbol{x})\big| \le & ~ \|f- g\|_{\infty}+\|\alpha\|_{\infty}\|f^{\alpha}_{\Delta,B_{m,n}}- g^{\alpha}_{\Delta,B_{m,n}}\|_{\infty}\\&+\|\alpha\|_{\infty} \|B_{m,n}(g)- B_{m,n}(f)\|_{\infty},
          \end{aligned}
    \end{equation*}
 for any $\boldsymbol{x} \in \rectangle.$
 Further, we deduce
 $$\|f^{\alpha}_{\Delta,B_{m,n}}- g^{\alpha}_{\Delta,B_{m,n}}\|_{\infty} \le \frac{1+\|\alpha\|_{\infty}\|B_{m,n}\|}{1- \|\alpha\|_{\infty}} \|f-g\|_{\infty}.$$
 Using $\|B_{m,n} \| = 1,$
  $$\|f^{\alpha}_{\Delta,B_{m,n}}- g^{\alpha}_{\Delta,B_{m,n}}\|_{\infty} \le \frac{1+\|\alpha \|_{\infty}}{1- \|\alpha \|_{\infty}} \|f-g\|_{\infty}.$$
  Consequently, we have
  $$ \mathcal{W}^\alpha_{\Delta}(g) \subseteq \mathcal{W}^\alpha_{\Delta}(f) +\dfrac{1+ \|\alpha \|_{\infty}}{1-\|\alpha\|_{\infty}}~ \|f-g\|_{\infty}U_{\mathcal{C}(\rectangle)},$$
  proving the Lipschitzness of $\mathcal{W}^\alpha_{\Delta},$ and hence the proof.
\end{proof}

\begin{remark}
For the multivalued mapping $\mathcal{W}^\alpha_{\Delta}$, let us first note the following:
\begin{enumerate}
\item By linearity of $\mathcal{F}_{\Delta, B_{m,n}}^\alpha,$ we have $\mathcal{W}^\alpha_{\Delta} (0) = \{0\}.$

\item Since if $\alpha \neq 0$, $m \neq k$ then $f_{\Delta,B_{m,n}}^\alpha \neq f_{\Delta,B_{k,l}}^\alpha$, hence $ \mathcal{W}^\alpha_{\Delta}: \mathcal{C}(\rectangle) \rightrightarrows \mathcal{C}(\rectangle)$ is not single-valued.
\end{enumerate}
In view of the above items, Theorems \ref{Multhm2}-\ref{Multhm2a} produce that the mapping $ \mathcal{W}^\alpha_{\Delta}: \mathcal{C}(\rectangle) \rightrightarrows \mathcal{C}(\rectangle )$ is not convex.
\end{remark}

 \begin{theorem}
 Let a fixed net $\triangle$ and  $m,n \in \mathbb{N},$ the multivalued mapping $\mathcal{T}^{\Delta}_{m,n}: \mathcal{C}(\rectangle) \rightrightarrows \mathcal{C}(\rectangle )$ by $$ \mathcal{T}^{\Delta}_{m,n}(f) =\{f^{\alpha}_{\triangle,B_{m,n}}: \alpha \in  \mathcal{C}(\rectangle ) ~\text{such that}~\|\alpha\|_{\infty} < 1  \}$$ is a process. 
 \end{theorem}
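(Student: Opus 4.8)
The plan is to verify the two defining conditions of a process from the definition: namely that $\mathcal{T}^{\Delta}_{m,n}(\lambda f) = \lambda\, \mathcal{T}^{\Delta}_{m,n}(f)$ for all $f \in \mathcal{C}(\rectangle)$ and $\lambda > 0$, and that $0 \in \mathcal{T}^{\Delta}_{m,n}(0)$. The crucial ingredient is that for each fixed admissible scaling function $\alpha$ with $\|\alpha\|_\infty < 1$, the single-valued fractal operator $\mathcal{F}^\alpha_{m,n}$ is linear (recalled from \cite[Theorem $3.2$]{VV1}), so that $(\lambda f)^\alpha_{\Delta,B_{m,n}} = \mathcal{F}^\alpha_{m,n}(\lambda f) = \lambda\, \mathcal{F}^\alpha_{m,n}(f) = \lambda\, f^\alpha_{\Delta,B_{m,n}}$.

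First I would establish the homogeneity. Fix $f \in \mathcal{C}(\rectangle)$ and $\lambda > 0$. By definition,
\begin{equation*}
\mathcal{T}^{\Delta}_{m,n}(\lambda f) = \big\{(\lambda f)^{\alpha}_{\Delta,B_{m,n}} : \alpha \in \mathcal{C}(\rectangle),~\|\alpha\|_{\infty} < 1 \big\}.
\end{equation*}
Using the linearity of $\mathcal{F}^\alpha_{m,n}$ for each such $\alpha$, we have $(\lambda f)^{\alpha}_{\Delta,B_{m,n}} = \lambda\, f^{\alpha}_{\Delta,B_{m,n}}$, and hence the set on the right equals $\lambda\, \big\{f^{\alpha}_{\Delta,B_{m,n}} : \alpha \in \mathcal{C}(\rectangle),~\|\alpha\|_{\infty} < 1\big\} = \lambda\, \mathcal{T}^{\Delta}_{m,n}(f)$. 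The key subtlety to check here is that the \emph{index set} over which $\alpha$ ranges is identical for both $f$ and $\lambda f$ (it does not depend on the function being transformed, only on the norm bound $\|\alpha\|_\infty < 1$), so scaling by $\lambda$ simply passes through each member of the set without changing which scaling functions appear. This is exactly what makes the set-level identity hold rather than mere inclusion.

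Next I would verify $0 \in \mathcal{T}^{\Delta}_{m,n}(0)$. Again by linearity of $\mathcal{F}^\alpha_{m,n}$, for each admissible $\alpha$ we have $0^\alpha_{\Delta,B_{m,n}} = \mathcal{F}^\alpha_{m,n}(0) = 0$, so the zero function is a member of $\mathcal{T}^{\Delta}_{m,n}(0)$; in fact $\mathcal{T}^{\Delta}_{m,n}(0) = \{0\}$. Together with the homogeneity, this confirms that $\mathcal{T}^{\Delta}_{m,n}$ is a process, completing the proof.

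I do not anticipate a serious obstacle, since the argument reduces entirely to the linearity of the underlying single-valued fractal operator applied index-by-index across the defining set. The only point requiring care is the bookkeeping of the set equality in the homogeneity step: one must confirm that no members are gained or lost when $\lambda$ is factored out, which follows because the admissibility condition $\|\alpha\|_\infty < 1$ is independent of the argument function. (Note that, unlike the homogeneity, one cannot upgrade this to full additivity or linearity of the multi-valued map, since distinct scaling functions produce distinct fractal functions — so $\mathcal{T}^{\Delta}_{m,n}$ is genuinely multi-valued and is only a process, not a linear process.)
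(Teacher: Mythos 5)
Your proposal is correct and follows essentially the same route as the paper: both verify positive homogeneity by passing $\lambda$ through each $f^{\alpha}_{\triangle,B_{m,n}}$ via the linearity of the single-valued operator $\mathcal{F}^{\alpha}_{m,n}$ (with the index set of admissible $\alpha$'s unchanged), and both obtain $0 \in \mathcal{T}^{\Delta}_{m,n}(0)$ from $\mathcal{F}^{\alpha}_{m,n}(0)=0$. Your added remark that the admissibility condition $\|\alpha\|_{\infty}<1$ is independent of the argument function makes the set equality explicit, but it is the same argument.
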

 \begin{proof}
 Let $f \in \mathcal{C}(\rectangle)$ and $\lambda > 0,$
 \begin{equation*}
                 \begin{aligned} 
                 \lambda \mathcal{T}^{\Delta}_{m,n}(f)=&\lambda\{f^{\alpha}:\alpha \in  \mathcal{C}(\rectangle ) ~\text{such that}~\|\alpha\|_{\infty} < 1   \}\\
                 =&\{ \lambda f^{\alpha}:\alpha \in  \mathcal{C}(\rectangle ) ~\text{such that}~\|\alpha\|_{\infty} < 1  \}\\
                =& \mathcal{T}^{\Delta}_{m,n}(\lambda f).
                 \end{aligned}
 \end{equation*} 
  Moreover, Using linearity of fractal operator, we have $f^{\alpha}=0,$ whenever $f=0.$ That is, $0 \in \mathcal{T}^{\Delta}_{m,n}(0).$
  Therefore, $\mathcal{T}^{\Delta}_{m,n}$ is a process.

 \end{proof}
 %\begin{theorem}
  %For a fixed partition $\triangle$ and operator $L,$ the multivalued mapping %$T: C(I) \rightarrow C(I)$ by $$ T(f) =\{f^{\alpha}_{\triangle,B_{m,n}}: %|\alpha|_{\infty} \le q <1 \}$$ is closed. 
  %\end{theorem}
  %\begin{proof}
 % We show that $G_T=\{(f,g): g \in T(f), ~\forall f \in C(I)\}$ is closed. Let $(f_k,g_n) \in G_T$ be a sequence such that $(f_k,g_n) \rightarrow (f,g).$ Since $g_n \in T(f_k),$ we obtain a sequence of scale vectors $\alpha_n$ such that $g_n= f^{\alpha_n}_n.$ It is obvious that the set $\{\alpha \in \mathbb{R}^{N-1}: |\alpha|_{\infty} \le q \}$ is a compact subset. Hence, the sequence $(\alpha_n)$ has a convergent subsequence. From this, we deduce that $g = f^{\alpha_0}.$ 

  %\end{proof}

 \begin{remark}
 One may see that $\mathcal{T}^{\Delta}_{m,n}$ is not convex through the following lines. Let $f, g \in \mathcal{C}(\rectangle),$ 
   \begin{equation*}
                    \begin{aligned} 
                     \mathcal{T}^{\Delta}_{m,n}(f+g)=&\{( f + g )^{\alpha}:\|\alpha\|_{\infty} < 1\}\\ 
                     =&\{ f^{\alpha}+g^{\alpha}:\|\alpha\|_{\infty} < 1 \}\\
                    \subseteq &\{ f^{\alpha}+g^{\beta}:\|\alpha\|_{\infty} < 1,\|\beta\|_{\infty} < 1 \}\\
                    =&\{f^{\alpha}:\|\alpha\|_{\infty} < 1  \}+\{g^{\beta}:\|\beta\|_{\infty} < 1  \}\\
                   \subseteq & \mathcal{T}^{\Delta}_{m,n}(f)+\mathcal{T}^{\Delta}_{m,n}(g).
                    \end{aligned}
    \end{equation*}
 \end{remark}
 \begin{theorem}
      Let a fixed net $\triangle$ and $m,n \in \mathbb{N},$ the multivalued mapping $\mathcal{T}^{\Delta}_{m,n}: \mathcal{C}(\rectangle) \rightrightarrows \mathcal{C}(\rectangle)$ defined by $$ \mathcal{T}^{\Delta}_{m,n}(f) =\{f^{\alpha}_{\triangle,B_{m,n}}: \|\alpha\|_{\infty} \le q < 1 \}, $$ satisfies the following: $$ \|\mathcal{T}^{\Delta}_{m,n}\| \le 1 + \frac{q}{1-q}\|Id -B_{m,n} \|.$$ 
   \end{theorem}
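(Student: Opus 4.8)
The plan is to control the process norm, which for a process on a normed space is
$\|\mathcal{T}^{\Delta}_{m,n}\| := \sup\bigl\{\|g\|_{\infty} : g \in \mathcal{T}^{\Delta}_{m,n}(f),\ \|f\|_{\infty}\le 1\bigr\}$,
by producing a single uniform estimate of $\|f^{\alpha}_{\Delta,B_{m,n}}\|_{\infty}$ in terms of $\|f\|_{\infty}$ that is valid for every admissible scaling function, i.e.\ every $\alpha$ with $\|\alpha\|_{\infty}\le q$. So I would fix $f\in\mathcal{C}(\rectangle)$ and such an $\alpha$, abbreviate $g:=f^{\alpha}_{\Delta,B_{m,n}}$, and read off from the self-referential Equation (\ref{Fnleq1}) the pointwise identity $g(\boldsymbol{x})-f(\boldsymbol{x})=\alpha(\boldsymbol{x})\bigl[g(Q_{ij}(\boldsymbol{x}))-B_{m,n}(f)(Q_{ij}(\boldsymbol{x}))\bigr]$ for $\boldsymbol{x}\in\rectangle_{ij}$. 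Since each $Q_{ij}$ takes its values in $\rectangle$, the bracketed term is dominated in absolute value by $\|g-B_{m,n}(f)\|_{\infty}$, and taking the supremum over all cells gives the basic estimate $\|g-f\|_{\infty}\le\|\alpha\|_{\infty}\,\|g-B_{m,n}(f)\|_{\infty}$.

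Next I would split $g-B_{m,n}(f)=(g-f)+(Id-B_{m,n})(f)$ and apply the triangle inequality, which turns the basic estimate into $\|g-f\|_{\infty}\le\|\alpha\|_{\infty}\|g-f\|_{\infty}+\|\alpha\|_{\infty}\|(Id-B_{m,n})(f)\|_{\infty}$. Because $\|\alpha\|_{\infty}\le q<1$, the first term on the right can be absorbed into the left-hand side, yielding $\|g-f\|_{\infty}\le\frac{\|\alpha\|_{\infty}}{1-\|\alpha\|_{\infty}}\,\|Id-B_{m,n}\|\,\|f\|_{\infty}$. One more use of the triangle inequality, $\|g\|_{\infty}\le\|f\|_{\infty}+\|g-f\|_{\infty}$, then gives $\|g\|_{\infty}\le\bigl(1+\frac{\|\alpha\|_{\infty}}{1-\|\alpha\|_{\infty}}\|Id-B_{m,n}\|\bigr)\|f\|_{\infty}$. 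This is exactly the Neumann-type absorption already carried out in the proofs of Theorem \ref{thmtopiso} and Theorem \ref{Multhm3}.

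The last step is to make the bound independent of the particular $\alpha$. Here the relevant observation is that $t\mapsto\frac{t}{1-t}$ is increasing on $[0,1)$, so $\frac{\|\alpha\|_{\infty}}{1-\|\alpha\|_{\infty}}\le\frac{q}{1-q}$ for every admissible $\alpha$; hence $\|g\|_{\infty}\le\bigl(1+\frac{q}{1-q}\|Id-B_{m,n}\|\bigr)\|f\|_{\infty}$ holds simultaneously for all $g\in\mathcal{T}^{\Delta}_{m,n}(f)$. Taking the supremum over $g\in\mathcal{T}^{\Delta}_{m,n}(f)$ and then over $\|f\|_{\infty}\le 1$ delivers the asserted bound $\|\mathcal{T}^{\Delta}_{m,n}\|\le 1+\frac{q}{1-q}\|Id-B_{m,n}\|$.

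The argument is essentially routine bookkeeping; the only two points needing a little care are the very first estimate and the monotonicity step. For the first, one must note that passing from the argument $Q_{ij}(\boldsymbol{x})$ to a free point is legitimate precisely because $Q_{ij}(\boldsymbol{x})\in\rectangle$, so each cellwise supremum is controlled by the global supremum $\|g-B_{m,n}(f)\|_{\infty}$. For the second, the monotonicity of $t/(1-t)$ is what converts the $\alpha$-dependent constant into the uniform constant built from $q$, which is what allows the single process-norm inequality to be extracted.
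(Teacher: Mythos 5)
Your proof is correct, and the computational core---the pointwise reading of Equation (\ref{Fnleq1}), the split $f^{\alpha}_{\Delta,B_{m,n}}-B_{m,n}(f)=(f^{\alpha}_{\Delta,B_{m,n}}-f)+(Id-B_{m,n})(f)$, the Neumann absorption using $\|\alpha\|_{\infty}\le q<1$, and the monotonicity of $t/(1-t)$---is exactly the estimate the paper relies on; indeed the paper's proof simply asserts the bound $\|f^{\alpha}\|/\|f\|\le 1+\frac{\|\alpha\|_{\infty}}{1-\|\alpha\|_{\infty}}\|Id-B_{m,n}\|$ without derivation, so your version is the more complete one. The one real divergence is the definition of the set-valued norm: the paper takes $\|\mathcal{T}^{\Delta}_{m,n}\|=\sup_{f}d\bigl(0,\mathcal{T}^{\Delta}_{m,n}(f)\bigr)/\|f\|_{\infty}$ with $d(0,\cdot)$ an \emph{infimum} over the members of $\mathcal{T}^{\Delta}_{m,n}(f)$ (the standard norm of a process in set-valued analysis), whereas you bound the \emph{supremum} over all members. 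Since your uniform estimate holds for every admissible $\alpha$ simultaneously, it dominates both quantities, so your statement is strictly stronger and implies the paper's. It is worth observing that under the paper's inf-based definition the theorem is nearly vacuous: $\alpha=0$ is admissible and $f^{0}_{\Delta,B_{m,n}}=f$, so $d\bigl(0,\mathcal{T}^{\Delta}_{m,n}(f)\bigr)\le\|f\|_{\infty}$ and the process norm is already at most $1$; your sup-based reading is the one for which the stated bound $1+\frac{q}{1-q}\|Id-B_{m,n}\|$ actually carries content.
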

   \begin{proof}
   We have 
   \begin{equation*}
     \begin{aligned} 
       \|\mathcal{T}^{\Delta}_{m,n}\|=& \sup_{f \in \mathcal{C}(\rectangle)} \frac{d(0, \mathcal{T}^{\Delta}_{m,n}(f))}{\|f\|_{\infty}}\\
       =& \sup_{f \in \mathcal{C}(\rectangle)}\inf_{f^{\alpha} \in \mathcal{T}^{\Delta}_{m,n}(f)} \frac{\| f^{\alpha}\|}{\|f\|}\\
       \le & \sup_{f \in \mathcal{C}(\rectangle)} \Big(1+ \frac{\|\alpha\|_{\infty}}{1-\|\alpha\|_{\infty}}\|Id - B_{m,n}\| \Big)\\
     \le & \sup_{f \in\mathcal{C}(\rectangle)} \Big(1+ \frac{q}{1-q}\|Id - B_{m,n}\| \Big)\\
       =&  1+ \frac{q}{1-q}\|Id - B_{m,n}\|,
       \end{aligned}
       \end{equation*}
    hence the proof.
   \end{proof}
 \begin{theorem}
   For a fixed net $\triangle$ and operator $L,$ the multivalued mapping $\mathcal{T}^{\Delta}_{m,n}:\mathcal{C}(\rectangle) \rightrightarrows \mathcal{C}(\rectangle)$ defined by $$ \mathcal{T}^{\Delta}_{m,n}(f) =\{f^{\alpha}_{\triangle,B_{m,n}}: \|\alpha \|_{\infty} < 1 \}$$ is lower semicontinuous. 
 \end{theorem}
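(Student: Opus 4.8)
The plan is to verify lower semicontinuity directly at an arbitrary $f \in \mathcal{C}(\rectangle)$ from the definition. So I would fix $f$ together with an open set $U \subseteq \mathcal{C}(\rectangle)$ satisfying $U \cap \mathcal{T}^{\Delta}_{m,n}(f) \neq \emptyset$, and then produce a $\delta > 0$ guaranteeing $U \cap \mathcal{T}^{\Delta}_{m,n}(g) \neq \emptyset$ whenever $\|f - g\|_{\infty} < \delta$.

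First I would extract from the hypothesis $U \cap \mathcal{T}^{\Delta}_{m,n}(f) \neq \emptyset$ a concrete witness: there exists $\alpha_0 \in \mathcal{C}(\rectangle)$ with $\|\alpha_0\|_{\infty} < 1$ such that $f^{\alpha_0}_{\Delta, B_{m,n}} \in U$. Since $U$ is open, I can select $r > 0$ with the open ball $B\big(f^{\alpha_0}_{\Delta, B_{m,n}}, r\big) \subseteq U$. The decisive idea is to keep this $\alpha_0$ \emph{fixed} and transfer nearness from the data $f, g$ to their fractal counterparts. For this I invoke the Lipschitz estimate already derived in the proof of Theorem \ref{Multhm3} (using $\|B_{m,n}\| = 1$), namely
$$\|f^{\alpha_0}_{\Delta,B_{m,n}} - g^{\alpha_0}_{\Delta,B_{m,n}}\|_{\infty} \le \frac{1 + \|\alpha_0\|_{\infty}}{1 - \|\alpha_0\|_{\infty}}\,\|f - g\|_{\infty}.$$
Equivalently, one may observe that $g \mapsto g^{\alpha_0}_{\Delta,B_{m,n}} = \mathcal{F}^{\alpha_0}_{m,n}(g)$ is a bounded linear, hence continuous, operator.

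Now I would choose $\delta := \dfrac{(1 - \|\alpha_0\|_{\infty})\,r}{1 + \|\alpha_0\|_{\infty}}$. Then any $g$ with $\|f - g\|_{\infty} < \delta$ gives $g^{\alpha_0}_{\Delta, B_{m,n}} \in B\big(f^{\alpha_0}_{\Delta, B_{m,n}}, r\big) \subseteq U$. Because $\|\alpha_0\|_{\infty} < 1$, the function $g^{\alpha_0}_{\Delta, B_{m,n}}$ belongs to $\mathcal{T}^{\Delta}_{m,n}(g)$, so $U \cap \mathcal{T}^{\Delta}_{m,n}(g) \neq \emptyset$, which is exactly the condition in the definition. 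Since $f$ was arbitrary, $\mathcal{T}^{\Delta}_{m,n}$ is lower semicontinuous on all of $\mathcal{C}(\rectangle)$.

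The argument is at heart a single continuity estimate, so there is no deep difficulty to overcome. The only point demanding care is the recognition that one fixed $\alpha_0$ suffices for every nearby $g$: one should resist letting the scaling function drift with $g$. Because $\alpha_0$ lies strictly inside the admissible ball $\{\|\alpha\|_{\infty} < 1\}$, reusing it for $g$ keeps $g^{\alpha_0}_{\Delta,B_{m,n}}$ inside $\mathcal{T}^{\Delta}_{m,n}(g)$, and the Lipschitz constant $\frac{1+\|\alpha_0\|_{\infty}}{1-\|\alpha_0\|_{\infty}}$ remains finite and independent of $g$, which is precisely what makes the uniform choice of $\delta$ possible.
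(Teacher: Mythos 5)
Your proof is correct and follows essentially the same route as the paper: fix the witness scaling function $\alpha_0$ and exploit the continuity of the single-valued operator $g \mapsto g^{\alpha_0}_{\Delta,B_{m,n}}$ to transfer nearness of the data to nearness of the fractal counterparts. The only difference is cosmetic --- you verify the open-set/$\delta$ formulation of lower semicontinuity quantitatively via the explicit Lipschitz constant $\frac{1+\|\alpha_0\|_{\infty}}{1-\|\alpha_0\|_{\infty}}$, whereas the paper argues through a convergent sequence $(f_k)$ and the continuity of the fractal operator.
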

 \begin{proof}
 Let $f \in \mathcal{C}(\rectangle),$ let $f^{\alpha} \in \mathcal{T}^{\Delta}_{m,n}(f)$ and a sequence $(f_k)$ in $\mathcal{C}(\rectangle)$ such that $f_k \to f.$ Since the fractal operator is continuous, we have $f^{\alpha}_k \rightarrow f^{\alpha}.$ It is clear that $f^{\alpha}_k \in \mathcal{T}^{\Delta}_{m,n}(f_k).$ Therefore, the result follows.
\end{proof}
 
  \begin{theorem}
  
Let $\triangle$ be a net of $\rectangle$ and $m,n \in \mathbb{N}.$ The multi-valued mapping $\mathcal{T}^{\Delta}_{m,n}:\mathcal{C}(\rectangle) \rightrightarrows \mathcal{C}(\rectangle)$ defined by $$ \mathcal{T}^{\Delta}_{m,n}(f) =\{f^{\alpha}_{\triangle,B_{m,n}}: \|\alpha\|_{\infty} \le q < 1 \}, $$ is Lipschitz.
  
  \end{theorem}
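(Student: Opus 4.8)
The plan is to verify the defining inclusion for Lipschitzness directly, namely to exhibit a constant $l>0$ with
$$\mathcal{T}^{\Delta}_{m,n}(f)\subseteq \mathcal{T}^{\Delta}_{m,n}(g)+l\,\|f-g\|_{\infty}\,U_{\mathcal{C}(\rectangle)},\qquad \forall~f,g\in\mathcal{C}(\rectangle).$$
Given an arbitrary member $f^{\alpha}_{\Delta,B_{m,n}}\in\mathcal{T}^{\Delta}_{m,n}(f)$, produced by some admissible scaling function with $\|\alpha\|_{\infty}\le q$, the natural element of $\mathcal{T}^{\Delta}_{m,n}(g)$ to compare it against is $g^{\alpha}_{\Delta,B_{m,n}}$, obtained with the \emph{same} $\alpha$. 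The entire argument then reduces to estimating $\|f^{\alpha}_{\Delta,B_{m,n}}-g^{\alpha}_{\Delta,B_{m,n}}\|_{\infty}$ by a bound that is uniform over all admissible $\alpha$.

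First I would reuse the pointwise computation already carried out in the proof of Theorem \ref{Multhm3}. Subtracting the self-referential identity \eqref{Fnleq1} written for $f$ from the one written for $g$, with the common scaling function $\alpha$, taking absolute values, and passing to the supremum, one obtains
$$\|f^{\alpha}_{\Delta,B_{m,n}}-g^{\alpha}_{\Delta,B_{m,n}}\|_{\infty}\le \frac{1+\|\alpha\|_{\infty}\|B_{m,n}\|}{1-\|\alpha\|_{\infty}}\,\|f-g\|_{\infty}.$$
Invoking $\|B_{m,n}\|=1$ from Note \ref{use7} simplifies the coefficient to $\tfrac{1+\|\alpha\|_{\infty}}{1-\|\alpha\|_{\infty}}$.

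The decisive step, and really the only content that distinguishes this statement from Theorem \ref{Multhm3}, is to eliminate the dependence on $\alpha$. Because $t\mapsto\tfrac{1+t}{1-t}$ is increasing on $[0,1)$ and every admissible scaling function satisfies $\|\alpha\|_{\infty}\le q<1$, we have $\tfrac{1+\|\alpha\|_{\infty}}{1-\|\alpha\|_{\infty}}\le\tfrac{1+q}{1-q}$, whence
$$\|f^{\alpha}_{\Delta,B_{m,n}}-g^{\alpha}_{\Delta,B_{m,n}}\|_{\infty}\le \frac{1+q}{1-q}\,\|f-g\|_{\infty}$$
simultaneously for all admissible $\alpha$. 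This uniformity is precisely what the strict truncation $q<1$ provides; in the unconstrained regime $\|\alpha\|_{\infty}<1$ the constant would blow up as $\|\alpha\|_{\infty}\to 1$, so no single Lipschitz constant could be extracted. I expect this uniformity over $\alpha$ to be the main (and essentially the sole) obstacle, and it is handled entirely by the monotonicity remark together with the hypothesis $q<1$.

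Finally I would assemble the inclusion. Since each $f^{\alpha}_{\Delta,B_{m,n}}\in\mathcal{T}^{\Delta}_{m,n}(f)$ lies within distance $\tfrac{1+q}{1-q}\|f-g\|_{\infty}$ of the element $g^{\alpha}_{\Delta,B_{m,n}}\in\mathcal{T}^{\Delta}_{m,n}(g)$, it follows that
$$\mathcal{T}^{\Delta}_{m,n}(f)\subseteq \mathcal{T}^{\Delta}_{m,n}(g)+\frac{1+q}{1-q}\,\|f-g\|_{\infty}\,U_{\mathcal{C}(\rectangle)},$$
which is exactly the Lipschitz property with constant $l=\tfrac{1+q}{1-q}$, completing the proof.
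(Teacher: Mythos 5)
Your proposal is correct and follows essentially the same route as the paper's proof: subtract the two instances of Equation \eqref{Fnleq1} with the same scaling function $\alpha$, bound the difference by $\frac{1+\|\alpha\|_{\infty}\|B_{m,n}\|}{1-\|\alpha\|_{\infty}}\|f-g\|_{\infty}$, use $\|B_{m,n}\|=1$ and $\|\alpha\|_{\infty}\le q$ to obtain the uniform constant $l=\frac{1+q}{1-q}$, and conclude the set inclusion. Your explicit remark on why the truncation $q<1$ is needed for uniformity over $\alpha$ is a welcome clarification that the paper leaves implicit, but the argument is the same.
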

 \begin{proof}
 Let $f,g \in \mathcal{C}(\rectangle).$ 
 Equation (\ref{Fnleq1}) yields
        
 \begin{equation*}
 \begin{aligned}
  \big|f^{\alpha}_{\triangle,B_{m,n}}(\boldsymbol{x})- g^{\alpha}_{\triangle,B_{m,n}}(\boldsymbol{x})\big|  = & \|f- g\|_{\infty}+\|\alpha\|_{\infty} \|f^{\alpha}_{\triangle,B_{m,n}}- g^{\alpha}_{\triangle,B_{m,n}}\|_{\infty}\\  & + \|\alpha\|_{\infty} \|B_{m,n}g- B_{m,n}f\|_{\infty}, 
 \end{aligned}
 \end{equation*}
 for every $\boldsymbol{x} \in \rectangle.$
 Further, we deduce
 $$\|f^{\alpha}_{\triangle,B_{m,n}}- g^{\alpha}_{\triangle,B_{m,n}}\| \le \frac{1+\|\alpha\|_{\infty}\|B_{m,n}\|}{1- \|\alpha\|_{\infty}} \|f-g\|_{\infty}.$$
 Since $\|\alpha\|_{\infty} \le q $ and $\|B_{m,n}\|=1,$ we get
  $$\|f^{\alpha}_{\triangle,B_{m,n}}- g^{\alpha}_{\triangle,B_{m,n}}\| \le \frac{1+q}{1- q} \|f-g\|.$$
  Choosing $l= \frac{1+ q}{1-q}, $ we have 
  $$ \mathcal{T}^{\Delta}_{m,n}(g) \subset \mathcal{T}^{\Delta}_{m,n}(f) +l~ \|f-g\|_{\infty} U_{\mathcal{C}(\rectangle)},$$
  proving the assertion.
 \end{proof}
 
 \begin{theorem}
   For a fixed admissible scale vector $\alpha$ and $m,n \in \mathbb{N},$ the multivalued mapping $\mathcal{V}^{\alpha}_{m,n}:\mathcal{C}(\rectangle) \rightrightarrows \mathcal{C}(\rectangle)$ defined by $$ \mathcal{V}^{\alpha}_{m,n}(f) =\{f^{\alpha}_{\triangle,B_{m,n}}: \text{all possible net}~ \triangle  \}$$ is a process. 
   \end{theorem}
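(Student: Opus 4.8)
The plan is to verify directly the two defining conditions of a \emph{process} for $\mathcal{V}^{\alpha}_{m,n}$: namely that $\lambda\,\mathcal{V}^{\alpha}_{m,n}(f) = \mathcal{V}^{\alpha}_{m,n}(\lambda f)$ for every $f \in \cC(\rectangle)$ and every $\lambda > 0$, and that $0 \in \mathcal{V}^{\alpha}_{m,n}(0)$. The only tool needed is the linearity of the fractal operator: for each \emph{fixed} net $\triangle$ (together with the fixed data $\alpha, m, n$), the assignment $f \mapsto f^{\alpha}_{\triangle,B_{m,n}}$ is linear, as recalled from \cite[Theorem $3.2$]{VV1}. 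This mirrors exactly the process part of the proof of Theorem \ref{Multhm3}, the difference being that here the union defining the image is taken over all nets $\triangle$ rather than over $m, n$.

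For the positive-homogeneity condition I would argue net-by-net. Fix $\lambda > 0$. By linearity of $f \mapsto f^{\alpha}_{\triangle,B_{m,n}}$ for each fixed net $\triangle$ we have $(\lambda f)^{\alpha}_{\triangle,B_{m,n}} = \lambda\, f^{\alpha}_{\triangle,B_{m,n}}$. Since the index set (all nets $\triangle$) is the same on both sides, taking the union over $\triangle$ gives
$$
\mathcal{V}^{\alpha}_{m,n}(\lambda f)
= \{(\lambda f)^{\alpha}_{\triangle,B_{m,n}} : \text{all } \triangle\}
= \{\lambda\, f^{\alpha}_{\triangle,B_{m,n}} : \text{all } \triangle\}
= \lambda\,\mathcal{V}^{\alpha}_{m,n}(f).
$$
For the second condition, linearity again yields $0^{\alpha}_{\triangle,B_{m,n}} = 0$ for every net $\triangle$, so that $\mathcal{V}^{\alpha}_{m,n}(0) = \{0\}$ and in particular $0 \in \mathcal{V}^{\alpha}_{m,n}(0)$. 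Both conditions together give that $\mathcal{V}^{\alpha}_{m,n}$ is a process.

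I do not anticipate a genuine obstacle, as both conditions reduce to the linearity of the fractal operator. The one point requiring care is that linearity must be invoked \emph{uniformly over all nets} $\triangle$: the operator $\mathcal{F}^{\alpha}_{m,n}$ recalled earlier carries an implicit fixed net, whereas here $\triangle$ varies, so I would note that the defining functional equation (\ref{Fnleq1}) and its induced fixed-point construction are linear in $f$ for any admissible net, hence the per-net linearity holds throughout the union. With this observed, the set-level identities above follow immediately.
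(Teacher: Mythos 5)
Your proposal is correct and follows essentially the same route as the paper: both verify positive homogeneity net-by-net via the linearity of $f \mapsto f^{\alpha}_{\triangle,B_{m,n}}$ for each fixed $\triangle$, and both obtain $0 \in \mathcal{V}^{\alpha}_{m,n}(0)$ from that same linearity. Your explicit remark that linearity must be invoked uniformly over all nets is a sensible clarification but does not change the argument.
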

   \begin{proof}
     Let $f \in \mathcal{C}(\rectangle)$ and $\lambda > 0,$ then
      \begin{equation*}
                      \begin{aligned} 
                      \lambda \mathcal{V}^{\alpha}_{m,n}(f)= & \lambda\{f^{\alpha}_{\triangle,B_{m,n}}:\text{all possible net}~ \triangle \}\\
                      =&\{ \lambda f^{\alpha}_{\triangle,B_{m,n}}:\text{all possible net}~ \triangle \}\\
                     =&\{(\lambda f)^{\alpha}_{\triangle,B_{m,n}}:\text{all possible net}~ \triangle \}\\
                     =& \mathcal{V}^{\alpha}_{m,n}(\lambda f).
                      \end{aligned}
      \end{equation*}
       The third equality follows from the fact that the fractal operator $ \mathcal{F}^{\alpha}_{m,n}$ is a linear operator. 
       Moreover, using linearity of the fractal operator, we have $f^{\alpha}_{\triangle,B_{m,n}}=0,$ whenever $f=0.$ That is, $0 \in \mathcal{V}^{\alpha}_{m,n}(0).$
       Therefore, $\mathcal{V}^{\alpha}_{m,n}$ is a process.
   
   \end{proof}
  \begin{theorem}
    For a fixed admissible scale function $\alpha$ and $m,n \in \mathbb{N},$ the multivalued mapping $\mathcal{V}^{\alpha}_{m,n}$ is lower semicontinuous. 
    \end{theorem}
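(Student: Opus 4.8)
The plan is to verify lower semicontinuity through its sequential characterization, which is available here because $\mathcal{C}(\rectangle)$ is a metric space. Recall that for a set-valued map between metric spaces, $T$ is lower semicontinuous at a point $f$ if and only if for every $h \in T(f)$ and every sequence $(f_k)$ with $f_k \to f$, there is a sequence $h_k \in T(f_k)$ with $h_k \to h$. This is precisely the tool used for the analogous statement for $\mathcal{T}^{\Delta}_{m,n}$, and I would mirror that argument, the only change being that here the net $\triangle$ plays the role of the varying parameter while $\alpha, m, n$ are held fixed.

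First I would fix $f \in \mathcal{C}(\rectangle)$, an arbitrary sequence $(f_k)$ in $\mathcal{C}(\rectangle)$ with $f_k \to f$, and an arbitrary member $h \in \mathcal{V}^{\alpha}_{m,n}(f)$. By the definition of $\mathcal{V}^{\alpha}_{m,n}$, there is some net $\triangle$ of $\rectangle$ with $h = f^{\alpha}_{\triangle, B_{m,n}}$. The key point is that, with $\triangle$ now held fixed, the assignment $g \mapsto g^{\alpha}_{\triangle, B_{m,n}}$ is exactly the single-valued fractal operator for that net, and it is (Lipschitz) continuous in $g$.

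Next I would make this continuity explicit from the self-referential equation (\ref{Fnleq1}): subtracting the defining equations for $f_k$ and $f$ (both with the same net $\triangle$ and the same $\alpha$) and using $\|B_{m,n}\| = 1$ gives, exactly as in the proof of Theorem \ref{Multhm3} and the Lipschitz statement for $\mathcal{T}^{\Delta}_{m,n}$,
\[
\big\|(f_k)^{\alpha}_{\triangle, B_{m,n}} - f^{\alpha}_{\triangle, B_{m,n}}\big\|_{\infty} \le \frac{1 + \|\alpha\|_{\infty}}{1 - \|\alpha\|_{\infty}} \, \|f_k - f\|_{\infty}.
\]
Since $f_k \to f$ uniformly and $\|\alpha\|_{\infty} < 1$ is fixed, the right-hand side tends to $0$, so $h_k := (f_k)^{\alpha}_{\triangle, B_{m,n}} \to f^{\alpha}_{\triangle, B_{m,n}} = h$. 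Because $h_k$ is built from $f_k$ using the admissible net $\triangle$, we have $h_k \in \mathcal{V}^{\alpha}_{m,n}(f_k)$ for each $k$, which is exactly what the sequential criterion demands.

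The argument presents no serious obstacle: the only thing to be careful about is that the chosen member $h$ of $\mathcal{V}^{\alpha}_{m,n}(f)$ singles out one net $\triangle$, and once that net is frozen we are back in the single-net situation where the contraction estimate from (\ref{Fnleq1}) applies verbatim. Since the resulting Lipschitz constant $\frac{1+\|\alpha\|_{\infty}}{1-\|\alpha\|_{\infty}}$ depends only on $\alpha$ and not on $\triangle$, no uniformity over the family of nets is even needed, as each member of the image set is approximated by reusing its own net. This would complete the verification of lower semicontinuity.
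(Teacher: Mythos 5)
Your proposal is correct and follows essentially the same route as the paper's proof: fix the member $h=f^{\alpha}_{\triangle,B_{m,n}}$ of $\mathcal{V}^{\alpha}_{m,n}(f)$, freeze its net $\triangle$, and use continuity of the corresponding single-valued fractal operator to get $(f_k)^{\alpha}_{\triangle,B_{m,n}}\to h$ with $(f_k)^{\alpha}_{\triangle,B_{m,n}}\in\mathcal{V}^{\alpha}_{m,n}(f_k)$, verifying the sequential criterion for lower semicontinuity. The only difference is that you make the continuity step quantitative via the Lipschitz bound $\frac{1+\|\alpha\|_{\infty}}{1-\|\alpha\|_{\infty}}$, where the paper simply invokes continuity of the fractal operator.
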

    \begin{proof}
     Let $f \in \mathcal{C}(\rectangle),$ let $f^{\alpha}_{\triangle,B_{m,n}} \in \mathcal{V}^{\alpha}_{m,n}(f)$ and a sequence $(f_k)$ converges to $f$ in $\mathcal{C}(\rectangle).$ Since the fractal operator is continuous, we have $(f_k)^{\alpha}_{\triangle,B_{m,n}} \rightarrow f^{\alpha}_{\triangle,B_{m,n}}.$ By definition of $\mathcal{V}^{\alpha}_{m,n},$ $(f_k)^{\alpha}_{\triangle,B_{m,n}} \in \mathcal{V}^{\alpha}_{m,n}(f_k).$ Hence, the lower semicontinuity of $\mathcal{V}^{\alpha}_{m,n}$ follows.
    
    \end{proof}
    
%\begin{definition}[\cite{Aubin}]
%Let $ T: (X,d) \to (Y,\rho) $ be a set-valued map between two metric spaces.
%\begin{enumerate}
%\item $T$ is called lower semicontinuous at $x\in X$ if for any open set $U$  in $Y$ such that $ U \cap T(x) \neq \emptyset $ there exists a $\delta > 0$ satisfying $U \cap T(x') \neq \emptyset $ whenever $d(x,x') < \delta.$
%The map $T$ is called lower semicontinuous if it is lower semicontinuous at every $x \in X.$ 
%\item $T$ is said to be closed if the graph of $T$ defined by $Gr(T):=\{(\boldsymbol{x}):y \in T(x)\}$ is a closed subset of $X \times Y$.
%\end{enumerate}
%\end{definition}

\begin{theorem}
The multi-valued function $\Phi:[\dim(X),\dim(X)+\dim(Y)] \rightarrow \mathcal{C}(X,Y)$ defined by 
\[
\Phi(\beta) :=\{f \in \mathcal{C}(X,Y): \dim (Gr(f)) =\beta \}
\] 
is lower semicontinuous. 
\end{theorem}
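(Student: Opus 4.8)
The plan is to reduce the statement directly to the density result of Theorem \ref{densethm}. Recall that, under the definition of lower semicontinuity adopted in the paper, $\Phi$ is lower semicontinuous at a point $\beta_0 \in [\dim(X),\dim(X)+\dim(Y)]$ provided that for every open set $U \subseteq \mathcal{C}(X,Y)$ with $U \cap \Phi(\beta_0) \neq \emptyset$, there exists $\delta > 0$ such that $U \cap \Phi(\beta) \neq \emptyset$ whenever $|\beta - \beta_0| < \delta$. Accordingly, I would begin by fixing such a $\beta_0$ and such an open set $U$.

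The decisive observation is that every fibre of $\Phi$ is already dense. Indeed, by definition $\Phi(\beta) = \mathcal{S}_\beta$, and Theorem \ref{densethm} asserts that $\mathcal{S}_\beta$ is dense in $\mathcal{C}(X,Y)$ for \emph{every} admissible $\beta \in [\dim(X),\dim(X)+\dim(Y)]$. Since the hypothesis $U \cap \Phi(\beta_0) \neq \emptyset$ forces $U$ to be a nonempty open set, the density of $\Phi(\beta)$ guarantees that $U \cap \Phi(\beta) \neq \emptyset$ for \emph{all} $\beta$ in the domain, not merely for those near $\beta_0$.

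Consequently, the required $\delta$ may be chosen completely arbitrarily (say $\delta = 1$): the condition $U \cap \Phi(\beta) \neq \emptyset$ holds for every $\beta$ in the interval, and in particular for every $\beta$ with $|\beta - \beta_0| < \delta$. As $\beta_0$ was arbitrary, this establishes lower semicontinuity of $\Phi$ on the whole interval, finishing the argument.

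There is no genuine analytic obstacle here; the entire substance is packaged into Theorem \ref{densethm}, and what remains is a purely logical verification. The single point deserving careful phrasing is that lower semicontinuity is a statement about \emph{each} prescribed open set that meets $\Phi(\beta_0)$, and the simultaneous density of all the fibres renders the quantifier on $\delta$ essentially vacuous, since any positive value works. For completeness I would also note that density presupposes nonemptiness of $\mathcal{S}_\beta$, which is precisely what the construction in the proof of Theorem \ref{densethm} supplies, so that no fibre $\Phi(\beta)$ is empty.
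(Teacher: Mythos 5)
Your proposal is correct and follows essentially the same route as the paper: both arguments reduce lower semicontinuity to the density of every fibre $\Phi(\beta)=\mathcal{S}_\beta$ established in Theorem \ref{densethm}, from which any nonempty open set $U$ meets every $\Phi(\beta)$ and the choice of $\delta$ becomes vacuous. Your version merely spells out the quantifiers more explicitly than the paper does.
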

\begin{proof}
Let $U$ be an open set of $\mathcal{C}(X,Y).$ 
 In the light of Theorem \ref{densethm}, that is, $\Phi(\alpha)=\mathcal{S}_{\alpha}$ is a dense subset of $\mathcal{C}(X,Y)$, we obtain 
 \[
 \mathcal{S}(\alpha) \cap U \ne \emptyset, ~\forall ~ \alpha \in [\dim(X),\dim(X)+\dim(Y)].
 \]
 Now, by the very definition of lower semicontinuous, the result follows.
\end{proof}

\begin{remark}
Note that the multivalued mapping $\Phi$ is not closed. To show this, let $f \in \mathcal{C}(X,Y)$ with $\dim(Gr(f)) > \dim(X).$ Consider a sequence of Lipschitz functions $(f_k)$ converging to $f$ uniformly. It is obvious that $\dim(Gr(f_k))= \dim(X).$ Now, we have $\big(\dim(X),f_k\big) \to \big(\dim(X),f\big)$ as $n \to \infty.$ Using $\big(\dim(X), f_k\big) \in Gr(\Phi)$ and $\big(\dim(X), f_k\big) \rightarrow \big(\dim(X),f\big)$ with $\dim(Gr(f)) > \dim(X),$ we get the result.   
\end{remark}

   \section{conclusion}   
  This paper has been intended to develop a newly defined notion of constrained approximation termed as dimension preserving approximation for bivariate functions. The later work of the paper has introduced some multi-valued operators associated with bivariate $\alpha$-fractal functions. The notion of dimension preserving approximation is new, and demands further developments. In particular, dimension preserving approximation of set-valued mappings may be one of our future investigations.
 %\subsection*{Acknowledgements}

\bibliographystyle{amsplain}

\begin{thebibliography}{10}
\bibitem {Aubin} J. P. Aubin, H. Frankowska, Set-Valued Analysis, Birkh\"{a}user, Boston, 1990.
\bibitem {MF1} M. F. Barnsley, Fractal functions and interpolation, Constr. Approx. 2 (1986) 303-329.
%\bibitem {MF5}  M. F. Barnsley, Fractal Everywhere,
% Academic Press, Orlando, Florida, 1988.
%\bibitem{MF2} M. F. Barnsley, A.N. Harrington, The calculus of fractal interpolation functions, J. Approx. Theory
%57 (1989) 14-34.
\bibitem{MF6} M. F. Barnsley, J. Elton, D. P. Hardin and P. R. Massopust,
Hidden variable fractal interpolation functions, SIAM J. Math.
Anal. 20(5) (1989) 1218-1248.
\bibitem{MF4} M. F. Barnsley, P. R. Massopust, Bilinear fractal interpolation and box dimension, J. Approx. Theory 192 (2015) 362-378.
\bibitem{CC} P. G. Cazassa, O. Christensen, Perturbation of operators and application to
	frame theory, J. Fourier Anal. Appl. 3(5) (1997) 543-557.
	
 \bibitem{DS} F. Deustch, I. Singar, On single-valuedness of convex set-valued maps, Set-Valued Var Anal. 1 (1993) 97-103.
\bibitem {Devore} R. DeVore, One sided approximation of functions, J. Approx. Theory, 1 (1968) 11-25.
\bibitem{Falc2} K. J. Falconer, The Hausdorff dimension of self-affine fractals, Math. Proc. Camb. Phil. Soc. 103 (1988) 339-350.
%\bibitem{Falc3} K. J. Falconer, Techniques in Fractal Geometry, John Wily \& Sons, New York, 1997
\bibitem {Fal} K. J. Falconer, Fractal Geometry: Mathematical Foundations and Applications, John Wiley \& Sons Inc., New York, 1999.
\bibitem {Fraser} K. J. Falconer, J. M. Fraser, The horizon problem for  prevalent surfaces, Math. Proc. Camb. Phil. Soc. (2011), 151, 355.

\bibitem {Gal}  S. G. Gal, Shape preserving approximation by real and complex polynomials, Birkh\"auser, Boston, Mass, USA 2008.
\bibitem {Hardin} D. P. Hardin, P. R. Massopust, The capacity for a class of fractal functions, Commun. Math. Phys. 105 (1986) 455-460.
\bibitem{HM} D. P. Hardin, P. R. Massopust, Fractal interpolation functions from $\R^n$ to $\R^m$ and their projections, Zeitschrift f\"ur Analysis u. i. Anw. 12 (1993), 535-548.
\bibitem{H} J. Hutchinson, Fractals and self-similarity, Indiana Univ. Math. J. 30 (1981) 713-747.
\bibitem {PM1} P. R. Massopust, Fractal Functions, Fractal Surfaces, and Wavelets. 2nd ed., Academic Press, San Diego, 2016.
\bibitem {RD} R. D. Mauldin, S. C. Williams, On the Hausdorff dimension of some graphs, Trans. Amer. Math. Soc. 298 (1986) 793-803. 
\bibitem {M2} M. A. Navascu\'es, Fractal polynomial interpolation, Z. Anal. Anwend. 25(2) (2005) 401-418.
\bibitem {M1} M. A. Navascu\'es, Fractal approximation, Complex Anal. Oper. Theory, 4(4) (2010) 953-974.
\bibitem{Ruan} H.-J. Ruan and Q. Xu, Fractal interpolation surfaces on Rectangular Grids, Bull. Aust. Math. Soc. 91 (2015) 435-446.
\bibitem {Rudin} W. Rudin, Principles of Mathematical Analysis, 3rd Edition, McGraw-Hill, New York, 1976.
\bibitem {Shen} W. Shen, Hausdorff dimension of the graphs of the classical Weierstrass functions, Math. Z. 289 (2018) 223-266. 
\bibitem {Totik} V. Totik, Approximation by Bernstein polynomials, Amer. J. Math. 114(4) (1994) 995-1018.
\bibitem{VM} S. Verma, P. R. Massopust, Dimension preserving approximation, arXiv:2002.05061, Feb 2020.
\bibitem{VV1} S. Verma, P. Viswanathan, A Fractal Operator Associated with Bivariate Fractal Interpolation Functions on Rectangular Grids, Results Math 75, 28 (2020).
\bibitem{VV2} S. Verma, P. Viswanathan, Parameter identification for a class of bivariate fractal interpolation functions and constrained approximation, Numer. Fun. Anal. Opt. 41(9) (2020) 1109-1148.
\bibitem{VV3} S. Verma, P. Viswanathan, A fractalization of rational trigonometric functions, Mediterr. J. Math., 17:93(2020).

\end{thebibliography}

\end{document}